\def\latex/{{\protect\LaTeX}}
\def\latexe/{{\protect\LaTeXe}}
\def\amslatex/{{\protect\AmS-\protect\LaTeX}}
\def\tex/{{\protect\TeX}}
\def\amstex/{{\protect\AmS-\protect\TeX}}
\def\bibtex/{{Bib\protect\TeX}}
\def\makeindx/{\textit{MakeIndex}}
\theoremstyle{plain} 
\newtheorem{thm}{Theorem}[section]
\newtheorem{lem}[thm]{Lemma}
\newtheorem{prop}[thm]{Proposition}
\newtheorem{cor}[thm]{Corollary}
\theoremstyle{definition}
\newtheorem{eg}[thm]{Example}
\newtheorem{conj}[thm]{Conjecture}
\newtheorem{rmk}[thm]{Remark}
\newcommand{\ZZ}{\mathbb{Z}}
\newcommand{\tensor}{\otimes}
 \DeclareMathOperator{\Tor}{Tor}
\DeclareMathOperator{\Ext}{Ext}
\DeclareMathOperator{\Hom}{Hom}
\DeclareMathOperator{\CI}{\textnormal{CI-dim}}
 \DeclareMathOperator{\Spec}{Spec}
 \DeclareMathOperator{\MCM}{MCM}
 \DeclareMathOperator{\pd}{pd}
 \DeclareMathOperator{\cx}{cx}
 \DeclareMathOperator{\depth}{depth}
 \DeclareMathOperator{\Cl}{Cl}
\DeclareMathOperator{\edim}{edim}
\DeclareMathOperator{\ds}{\displaystyle}
 \DeclareMathOperator{\syz}{syz}
\newcommand{\ses}[3]{0 \to {#1} \to {#2} \to {#3} \to 0}
\begin{document}

\title{Necessary conditions for  the depth formula over Cohen-Macaulay local rings}
\author{Olgur Celikbas and Hailong Dao}

\address{Department of Mathematics, University of Kansas, 405 Snow Hall, 1460 Jayhawk
Blvd, Lawrence, KS 66045-7523, USA}
\email{ocelikbas@math.ku.edu}

\address{Department of Mathematics, University of Kansas, 405 Snow Hall, 1460 Jayhawk
Blvd, Lawrence, KS 66045-7523, USA}
\email{hdao@math.ku.edu}

\thanks{The second author is partially supported by NSF grant DMS 0834050}

\subjclass[2000]{13D07, 13C14, 13C15}

\keywords{Depth formula, complete intersection dimension, vanishing of (co)homology}

\maketitle

\begin{abstract} Let $R$ be a Cohen-Macaulay local ring and let $M$ and $N$ be finitely generated $R$-modules. In this paper we investigate some of the necessary conditions for the depth formula $\depth(M)+\depth(N)=\depth(R)+\depth(M\otimes_{R}N)$ to hold. We show that, under certain conditions, $M$ and $N$ satisfy the depth formula if and only if $\Tor_{i}^{R}(M,N)=0$ for all $i\geq 1$. We also examine the relationship between the depth of $M\otimes_RN$ and the vanishing of $\Ext$ modules with various applications. One of them extends partially a result by Auslander on even dimensional regular local rings to complete intersections.  In another application, we show that there is no nontrivial semidualizing module over Veronese subrings of the formal power series ring $k[[X_{1}, \dots, X_{n}]]$ over a field $k$.
\end{abstract}

\section{Introduction}

Let $(R, \mathfrak m, k)$ be a local ring (a commutative Noetherian ring with unique maximal ideal $\mathfrak m$) and let $M$ and $N$ be finitely generated nonzero $R$-modules. We say the pair $(M,N)$ satisfies the {\it depth formula} provided:
$$\depth(M)+\depth(N)=\depth(R)+\depth(M\otimes_{R}N)$$

This useful formula is not true in general, as one can see by taking $R$ to  have depth at least $1$ and $M=N=k$.

The depth formula was first studied by Auslander \cite{Au} in 1961 for finitely generated modules over regular local rings. More precisely, if $R$ is a local ring, Auslander proved that $M$ and $N$ satisfy the depth formula provided $M$ has finite projective dimension and $\Tor_i^R(M,N)=0$ for all $i\geq 1$ \cite[1.2]{Au}. Three decades later Huneke and Wiegand \cite[2.5]{HW1} proved that the depth formula holds for $M$ and $N$ over {\it complete intersection} rings $R$ provided $\Tor_i^R(M,N)=0$ for all $i\geq 1$, even if $M$ does not have finite projective dimension. Recall that $R$ is said to be a \textit{complete intersection} if the defining ideal of some (equivalently every) Cohen presentation of the $\mathfrak m$-adic completion $\widehat{R}$ of $R$ can be generated by a regular sequence. If $R$ is such a ring, then $\widehat{R}$ has the form $Q/(\underline{f})$, where $\underline{f}$ is a regular sequence of $Q$ and $Q$ is a ring of formal power series over the field $k$, or over a complete discrete valuation ring with residue field $k$. 

There are plenty of sufficient conditions in the literature for $M$ and $N$ to satisfy the depth formula, cf., for example, \cite{BerJor}, \cite{IC}, \cite{CJ}, \cite{HW1}, \cite{Jo1} and \cite{Mi}. A common ingredient of those conditions is the vanishing of $\Tor_i^R(M,N)$ for all $i\geq 1$. In particular the following theorem, proved independently by Araya-Yoshino \cite[2.5]{ArY} and Iyengar \cite[4.3]{I}, shows that the vanishing of $\Tor_{i}^{R}(M,N)$ for modules of finite complete intersection dimension is sufficient for the depth formula (cf. Section \ref{Tor} for the definition of finite complete intersection dimension.)   

\begin{thm}\label{Choi}(Araya-Yoshino \cite{ArY}, Iyengar \cite{I}) Let $R$ be a local ring and let $M$ and $N$ be finitely generated $R$-modules. Assume that $M$ has finite complete intersection dimension (e.g., $R$ is a complete intersection.) If $\Tor_{i}^{R}(M,N)=0$ for all $i\geq 1$, then $(M,N)$ satisfies the depth formula. 
\end{thm}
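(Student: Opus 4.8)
The plan is to reduce, along the quasi-deformation that defines finite complete intersection dimension, to the case $R = Q/(\underline{f})$ with $\pd_Q M < \infty$, and then to combine a Koszul computation with the depth formula for complexes over $Q$. First, by definition of finite complete intersection dimension there is a quasi-deformation $R \to R' \leftarrow Q$, i.e.\ a flat local homomorphism $R \to R'$ together with a surjection $Q \twoheadrightarrow R'$ whose kernel is generated by a $Q$-regular sequence $\underline{f} = f_1,\dots,f_c$ contained in the maximal ideal of $Q$, such that $\pd_Q(M \otimes_R R') < \infty$. Writing $M' = M \otimes_R R'$ and $N' = N \otimes_R R'$, faithful flatness of $R \to R'$ gives $\Tor_i^{R'}(M',N') \cong \Tor_i^R(M,N) \otimes_R R' = 0$ for $i \ge 1$ and $M' \otimes_{R'} N' \cong (M \otimes_R N) \otimes_R R'$, while flat local base change raises each of $\depth_R M$, $\depth_R N$, $\depth R$, $\depth_R(M \otimes_R N)$ by $\depth(R'/\mathfrak{m}R')$; hence the depth formula holds over $R$ if and only if it holds over $R'$. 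So I may assume $R = Q/(\underline{f})$ with $\underline{f}$ a $Q$-regular sequence in the maximal ideal, $\pd_Q M < \infty$, and $M,N$ viewed as $Q$-modules via $Q \twoheadrightarrow R$.

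Next, since $\underline{f}$ is $Q$-regular the Koszul complex $K(\underline{f};Q)$ is a $Q$-free resolution of $R$, and since $M$ is an $R$-module we have $\underline{f}M = 0$, so $K(\underline{f};Q) \otimes_Q M$ has zero differential; thus $M \otimes_Q^{\mathbf{L}} R \simeq \bigoplus_{j=0}^{c} M^{\binom{c}{j}}[j]$ in the derived category of $R$. Tensoring over $R$ with $N$, using associativity of the derived tensor product and the hypothesis $\Tor_i^R(M,N) = 0$ for $i \ge 1$ (so that $M \otimes_R^{\mathbf{L}} N \simeq M \otimes_R N$), I obtain $M \otimes_Q^{\mathbf{L}} N \simeq \bigoplus_{j=0}^{c} (M \otimes_R N)^{\binom{c}{j}}[j]$; at the level of homology this is the collapse of the change-of-rings spectral sequence $\Tor_p^R(\Tor_q^Q(M,R),N) \Rightarrow \Tor_{p+q}^Q(M,N)$, in which $\Tor_q^Q(M,R) \cong M^{\binom{c}{q}}$, and it gives $\Tor_n^Q(M,N) \cong (M \otimes_R N)^{\binom{c}{n}}$ for all $n$. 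Reading off depths of complexes from this, $\depth_Q(M \otimes_Q^{\mathbf{L}} N) = \depth_Q(M \otimes_R N) - c$.

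Finally, since $\pd_Q M < \infty$, the depth formula for complexes over $Q$ (Auslander's theorem in its complex form, due to Iyengar \cite{I}) gives $\depth_Q(M \otimes_Q^{\mathbf{L}} N) = \depth_Q M + \depth_Q N - \depth Q$. Comparing the two expressions for $\depth_Q(M \otimes_Q^{\mathbf{L}} N)$, and using that the depth (with respect to the maximal ideal of $Q$) of a finitely generated $R$-module equals its depth over $R$ together with $\depth Q = \depth R + c$, the two copies of $c$ cancel and the depth formula for $(M,N)$ over $R$ drops out. I expect the substantive step to be this last one: the depth formula for complexes over $Q$ is the real engine. For a purely module-theoretic argument it would have to be replaced by an analysis of the depths of the modules $\Tor_i^Q(M,N)$, which genuinely do not all vanish here — indeed $\Tor_n^Q(M,N) \ne 0$ for $0 \le n \le c$, so one cannot directly invoke \cite[1.2]{Au} — or else by an induction on the complexity $\cx_R M$, using the short exact sequences attached to a quasi-deformation to peel off one element of $\underline{f}$ at a time and reduce to the finite projective dimension case. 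The reduction step and the Koszul computation are routine; the point worth isolating is that $M$ being an $R$-module forces $\underline{f}M = 0$, which is exactly what makes the Koszul complex split.
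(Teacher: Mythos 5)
The paper does not actually prove Theorem \ref{Choi}: it is quoted with attributions to Araya--Yoshino and Iyengar, so there is no in-paper argument to measure you against. Judged on its own, your proof is correct, and it is essentially the derived-category route that Iyengar's paper takes (the very route alluded to in the remark following Theorem \ref{Choi}). The reduction along the quasi-deformation is fine: flat local base change adds $\depth(R'/\mathfrak m R')$ to each of the four depths, so the formula transfers both ways, and $\Tor$-vanishing ascends by flatness. The Koszul step is fine: $\underline f M=0$ makes $K(\underline f;Q)\otimes_Q M$ have zero differentials, so $M\otimes_Q^{\mathbf L}R\simeq\bigoplus_j M^{\binom{c}{j}}[j]$, and with associativity and $\Tor_i^R(M,N)=0$ for $i\geq 1$ you get $\Tor_n^Q(M,N)\cong (M\otimes_RN)^{\binom{c}{n}}$, whence $\depth_Q(M\otimes_Q^{\mathbf L}N)=\depth_R(M\otimes_RN)-c$ (using that depth over $Q$ of an $R$-module equals its depth over $R$, since $Q\twoheadrightarrow R$ is surjective). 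The one genuinely heavy input is the unconditional derived depth formula over $Q$ for a module of finite projective dimension, with no Tor-independence over $Q$ (indeed $\Tor_n^Q(M,N)\neq 0$ for $0\leq n\leq c$, as you note); this is a distinct, previously established result of Foxby and Iyengar, not the statement being proved, so there is no circularity, and you correctly isolate it as the engine. By contrast, Araya--Yoshino's proof of the same theorem is module-theoretic (an induction exploiting the quasi-deformation and change-of-rings sequences, in the spirit of Huneke--Wiegand), which is the alternative you sketch at the end; your derived version buys a shorter argument at the cost of invoking depth for complexes, while their version stays within finitely generated modules.
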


We note that Iyengar's result \cite[Section 4]{I} that concerns the depth formula is more general than the one stated in Theorem \ref{Choi}; it establishes the validity of the (derived) depth formula for certain complexes of modules (cf. also \cite{CJ} and \cite{Foxby}).

The purpose of this note is to understand {\it necessary} conditions for the depth formula. From the known results, one obvious candidate is  the vanishing of $\Tor_{i}^{R}(M,N)$ for all $i\geq 1$. In general, one cannot hope for such a phenomenon; the depth formula is satisfied by any pair of finitely generated modules $(M,N)$ such that $\depth(M) =\depth(R)$ and $N$ has finite length. It is therefore somewhat surprising that a partial converse of Theorem \ref{Choi} can be obtained for a special class of rings. Here is one of the main corollaries of our results which we will prove in Section \ref{Tor}. Recall that the embedding dimension of $R$, denoted by $\edim(R)$, is the minimal number of generators of $\mathfrak m$.
 
\begin{thm} \label{main1} Let $R$ be a Cohen-Macaulay local ring and let $M$ and $N$ be non-zero finitely generated $R$-modules. Set $e= \edim(R)- \depth(R)$ and assume:
\begin{enumerate}
\item $M$ has finite complete intersection dimension.
\item $R_{p}$ is regular for each prime ideal $p$ of $R$ of height at most $e$. 
\item $\Tor^{R}_{i}(M,N)=0$ for all $i=1, \dots,  \depth(R)-\depth(M\otimes_{R}N)$.
\end{enumerate}
Then $(M,N)$ satisfies the depth formula if and only if $\Tor^{R}_{i}(M,N)=0$ for all $i\geq 1$.
\end{thm}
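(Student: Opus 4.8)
The plan is to treat the two implications separately. The ``if'' implication is exactly Theorem \ref{Choi}, and uses only hypothesis (1): if $\Tor^R_i(M,N)=0$ for all $i\ge 1$ and $M$ has finite complete intersection dimension, then $(M,N)$ satisfies the depth formula. So the content is the converse: assume $(M,N)$ satisfies the depth formula and prove that $\Tor^R_i(M,N)=0$ for all $i\ge 1$. Put $q=\depth(R)-\depth(M\otimes_R N)$. Since $R$ is Cohen--Macaulay we have $\depth(M)\le\depth(R)$ and $\depth(N)\le\depth(R)$, so the depth formula forces $q=2\depth(R)-\depth(M)-\depth(N)\ge 0$, and hypothesis (3) says precisely that $\Tor^R_i(M,N)=0$ for $1\le i\le q$. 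Because $M$ has finite complete intersection dimension one has $\cx_R(M)\le\edim(R)-\depth(R)=e$, so hypothesis (2) in particular makes $R_{\mathfrak p}$ regular for every prime $\mathfrak p$ with $\height(\mathfrak p)\le\cx_R(M)$.

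The argument rests on three ingredients. First, since $M$ has finite complete intersection dimension, Iyengar's derived depth formula (cf.\ the remark after Theorem \ref{Choi}) applies to $M\otimes^{\mathbf L}_R N$ and gives $\depth(M\otimes^{\mathbf L}_R N)=\depth(M)+\depth(N)-\depth(R)$, which by the assumed depth formula equals $\depth(M\otimes_R N)=\depth\bigl(H_0(M\otimes^{\mathbf L}_R N)\bigr)$. Second, for a prime $\mathfrak p$ with $\height(\mathfrak p)\le e$ the ring $R_{\mathfrak p}$ is regular, so $M_{\mathfrak p}$ has finite projective dimension; if $q\ge 1$, localizing $\Tor^R_q(M,N)=0$ and invoking the rigidity of $\Tor$ over regular local rings together with the vanishing for $1\le i\le q$ yields $\Tor^R_i(M,N)_{\mathfrak p}=0$ for all $i\ge 1$, while if $q=0$ then $M$ and $N$ are maximal Cohen--Macaulay, so $M_{\mathfrak p}$ is free and the same conclusion holds; either way every $\Tor^R_i(M,N)$ with $i\ge 1$ is supported only at primes of height $\ge e+1$. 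Third, I pass to a quasi-deformation $R\to R'\leftarrow Q$ witnessing $\CI_R(M)<\infty$, chosen so that $R'$ is a complete intersection of codimension $c=\cx_R(M)$ over which $M'=M\otimes_R R'$ has finite projective dimension; faithful flatness of $R\to R'$ carries over the depth equalities and all the $\Tor$-vanishing, and the regularity hypothesis descends suitably to $R'$.

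Over the complete intersection $R'$ the modules $\Tor^{R'}_i(M',N')$, $i\ge 1$, are supported only at primes of height $>c=\cx_{R'}(M')$, so localizing any such non-zero module at a minimal prime of its support makes it a finite-length module over a complete intersection localization. One then plays the rigidity of $\Tor$ over complete intersections (where $c$ controls the gaps permitted in the $\Tor$-sequence) against the fact that $c$ also bounds the dimension of the cohomological support variety of the pair: in the case of bounded $\Tor$ this pins down the depth of the top non-vanishing $\Tor$ by an Auslander-type computation, and in the unbounded case it forces the $\eta$-pairing over $R'$ to detect a non-trivial contribution. In either case the vanishing of $\Tor^{R'}_i(M',N')$ for $1\le i\le q$, combined with the ``smallness'' of the higher $\Tor$, is incompatible with $\depth(M'\otimes^{\mathbf L}_{R'}N')=\depth(M'\otimes_{R'}N')$ unless $\Tor^{R'}_i(M',N')=0$ for all $i\ge 1$; descending along $R\to R'$ then gives $\Tor^R_i(M,N)=0$ for all $i\ge 1$.

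The main obstacle is exactly this last incompatibility. A purely homological estimate from the hyper-$\Ext$ spectral sequence only yields $\depth(M\otimes^{\mathbf L}_R N)\ge\min\bigl(\depth(M\otimes_R N),\,\min_{i\ge 1}\{\depth\Tor^R_i(M,N)-i\}\bigr)$, which is entirely consistent with non-zero higher $\Tor$; the contradiction is not formal and genuinely needs the rigidity of $\Tor$ in low codimension to annihilate the contributions of the punctured spectrum, reinforced either by the Auslander-type depth computation at a minimal prime of the top non-vanishing $\Tor$ or, for unbounded $\Tor$, by the support-variety machinery over $R'$. Securing a uniform treatment of the bounded and unbounded cases, and verifying that the quasi-deformation can be arranged to preserve regularity in codimension $\le c$, are where the real work lies.
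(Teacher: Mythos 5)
Your outline has the right shell (the ``if'' direction is Theorem \ref{Choi}, and your localization argument showing that all $\Tor_i^R(M,N)$, $i\ge 1$, are supported only at primes of height $>e$ is essentially what the paper does as well), but the central step is missing and one of your stated ingredients is false. You assert that finite complete intersection dimension gives the derived depth formula $\depth(M\otimes^{\mathbf{L}}_RN)=\depth(M)+\depth(N)-\depth(R)$ unconditionally; it does not. For $R=k[[x]]/(x^2)$ and $M=N=k$ one has $\CI_R(k)<\infty$ but $\Tor_i^R(k,k)\neq 0$ for all $i$, so $M\otimes^{\mathbf{L}}_RN$ has unbounded homology and the formula fails; in Iyengar's theorem the vanishing (or at least boundedness) of Tor is a \emph{hypothesis}, which is exactly what you are trying to prove, so the argument is circular at this point. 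More importantly, the decisive step --- that nonzero higher Tor modules of small support are incompatible with $\depth(M\otimes^{\mathbf{L}}_RN)=\depth(M\otimes_RN)$ --- is never carried out: you yourself concede that the spectral-sequence estimate is consistent with nonvanishing Tor and that the ``Auslander-type computation'' in the bounded case, the ``$\eta$-pairing/support-variety'' argument in the unbounded case, and the descent of the regularity hypothesis along a quasi-deformation are ``where the real work lies.'' As written, the proposal is a strategy, not a proof.

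For comparison, the paper's route avoids derived categories and support varieties entirely. Theorem \ref{main1} is reduced to Theorem \ref{p1} (with $w=0$, using $\cx_R(M)\le e$ by \cite[5.6]{AGP}). There, one first inducts on $\depth(R)-\depth(M\otimes_RN)$ using Lemma \ref{l1}: replacing $M$ by syzygies reduces to the case where $M$, $N$ and $M\otimes_RN$ are all maximal Cohen--Macaulay, so $\CI_R(M)=0$. Then the pushforward construction produces exact sequences $0\to M_{j-1}\to F_j\to M_j\to 0$ with $M_0=M$; since the higher Tor modules are not supported on $X^{r}(R)$ (your localization step), the modules $M_j\otimes_RN$ are torsion-free for $j\le r=\cx_R(M)$, and this propagates vanishing upward to give $\Tor_i^R(M_{r+1},N)=0$ for $i=1,\dots,r+1$. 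Jorgensen's gap theorem \cite[2.6]{Jor} --- vanishing of $\cx_R(M)+1$ consecutive Tor modules for a module of finite CI-dimension forces vanishing of all higher ones --- then yields $\Tor_i^R(M_{r+1},N)=0$ for all $i\ge 1$, and hence the same for $(M,N)$. Some concrete substitute of this kind (pushforwards plus a complexity-gap theorem) is what your ``incompatibility'' step needs; without it the proposed proof does not close.
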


In particular, we have:

\begin{cor} Let $R$ be a Cohen-Macaulay local ring and let $M$ and $N$ be non-zero finitely generated $R$-modules. Set $e=\edim(R)- \depth(R)$ and assume:
\begin{enumerate}
\item $M$ has finite complete intersection dimension.
\item $R_{p}$ is regular for each prime ideal $p$ of $R$ of height at most $e$. 
\item $\depth(M) =\depth(N) =\depth(R)$.
\end{enumerate}
Then $(M,N)$ satisfies the depth formula if and only if $\Tor^{R}_{i}(M,N)=0$ for all $i\geq 1$.
\end{cor}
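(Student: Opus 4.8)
The plan is to derive this corollary as a formal consequence of Theorem~\ref{main1} and Theorem~\ref{Choi}; essentially no new idea is needed beyond the observation that hypothesis~(3) of Theorem~\ref{main1} becomes vacuous in the present situation.

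First I would dispose of the easy implication. Suppose $\Tor_i^R(M,N)=0$ for all $i\geq 1$. Since $M$ has finite complete intersection dimension by hypothesis~(1), Theorem~\ref{Choi} immediately gives that $(M,N)$ satisfies the depth formula; this direction uses neither hypothesis~(2) nor the depth assumptions in~(3).

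For the converse, suppose $(M,N)$ satisfies the depth formula. Combining it with $\depth(M)=\depth(N)=\depth(R)$ gives $2\depth(R)=\depth(R)+\depth(M\otimes_R N)$, hence $\depth(M\otimes_R N)=\depth(R)$ and therefore $\depth(R)-\depth(M\otimes_R N)=0$. Consequently the list of indices $i=1,\dots,\depth(R)-\depth(M\otimes_R N)$ appearing in hypothesis~(3) of Theorem~\ref{main1} is empty, so that hypothesis holds trivially. Since hypotheses~(1) and~(2) of Theorem~\ref{main1} coincide with~(1) and~(2) here, Theorem~\ref{main1} applies, and its conclusion forces $\Tor_i^R(M,N)=0$ for all $i\geq 1$.

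The only point requiring a moment's thought — and the closest thing to an obstacle — is checking that hypothesis~(3) of Theorem~\ref{main1} is satisfied before invoking it; as shown above this is automatic, because the depth formula together with~(3) pins down $\depth(M\otimes_R N)$ exactly. All of the substantive content is already carried by the proof of Theorem~\ref{main1}.
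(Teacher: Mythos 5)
Your proposal is correct and follows essentially the same route as the paper: the corollary is stated there as an immediate particular case of Theorem~\ref{main1}, precisely because the depth formula together with $\depth(M)=\depth(N)=\depth(R)$ forces $\depth(R)-\depth(M\otimes_RN)=0$, making hypothesis~(3) of that theorem vacuous, while the easy direction is Theorem~\ref{Choi}. No gaps.
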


In Section \ref{Ext} we investigate similar conditions on the depth of $M\tensor_RN$ that force the vanishing of $\Ext$ modules.  Such conditions turn out to be quite useful for various applications. Our main result in this section is Theorem \ref{t1}. A consequence of this theorem, Corollary \ref{cor1}, implies:

\begin{cor} Let $(R, \mathfrak m)$ be a $d$-dimensional local Cohen-Macaulay ring ($d\geq 1$) with a canonical module $\omega_R$ and let $M$ and $N$ be maximal Cohen-Macaulay $R$-modules. Assume $R$ has an isolated singularity. Then  $(M,N)$ satisfies the depth formula if and only if $\Ext^i_R(M, \Hom_R(N,\omega_R))=0$ for all $i=1,\dots ,d$. 
\end{cor}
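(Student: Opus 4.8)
The plan is to reduce the depth formula to the assertion that $M\tensor_RN$ is maximal Cohen--Macaulay, and then, using the isolated-singularity hypothesis, to identify the modules $\Ext^i_R(M,N^{\dagger})$ with $N^{\dagger}:=\Hom_R(N,\omega_R)$ in the range $1\le i\le d$ as (Matlis duals of) the local cohomology of $M\tensor_RN$. Since $M$ and $N$ are maximal Cohen--Macaulay, $\depth(M)=\depth(N)=d=\depth(R)$, so the depth formula for $(M,N)$ says exactly $\depth(M\tensor_RN)=d$; as $M\tensor_RN\neq 0$ by Nakayama, this holds if and only if $M\tensor_RN$ is maximal Cohen--Macaulay. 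I would use two standard facts about the canonical module: (i) $\omega_R$ has injective dimension $d$; and (ii) local duality $H^j_{\mathfrak m}(X)^{\vee}\cong\Ext^{d-j}_R(X,\omega_R)$ for finitely generated $X$, so that a nonzero finitely generated $X$ is maximal Cohen--Macaulay if and only if $\Ext^j_R(X,\omega_R)=0$ for $j=1,\dots,d$. In particular, a finite-length module $T$ has $\Ext^j_R(T,\omega_R)=0$ for $j\ne d$ and $\Ext^d_R(T,\omega_R)\cong T^{\vee}$ (Matlis dual).

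Next, choose a free resolution $F_{\bullet}\to M$ and consider the complex $F_{\bullet}\tensor_RN$, whose $q$-th homology is $\Tor^R_q(M,N)$; here $H_0=M\tensor_RN$, while for $q\ge 1$ the module $\Tor^R_q(M,N)$ has finite length, because localizing at any nonmaximal prime $p$ the ring $R_p$ is regular and $M_p,N_p$ are free, so the higher Tor vanishes there. Hom--tensor adjunction gives an isomorphism of complexes $\Hom_R(F_{\bullet}\tensor_RN,\omega_R)\cong\Hom_R(F_{\bullet},N^{\dagger})$, so the $i$-th cohomology of the left-hand complex is $\Ext^i_R(M,N^{\dagger})$. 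I would then invoke the hyper-Ext spectral sequence
\[
E_2^{p,q}=\Ext^p_R\!\big(\Tor^R_q(M,N),\,\omega_R\big)\ \Longrightarrow\ \Ext^{p+q}_R(M,N^{\dagger}).
\]
By the facts above, the only possibly nonzero $E_2$-entries are the row $q=0$, with $E_2^{p,0}=\Ext^p_R(M\tensor_RN,\omega_R)$ for $0\le p\le d$, together with the single column $p=d$ in rows $q\ge 1$, with $E_2^{d,q}\cong\Tor^R_q(M,N)^{\vee}$. Every higher differential $d_r$ ($r\ge 2$) emanating from a nonzero entry lands either in a row $<0$ or in a column $>d$, hence vanishes by (i); so the sequence degenerates at $E_2$. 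Since for each $n$ exactly one $E_2$-entry then contributes to $\Ext^n_R(M,N^{\dagger})$, we obtain $\Ext^n_R(M,N^{\dagger})\cong\Ext^n_R(M\tensor_RN,\omega_R)$ for $1\le n\le d$ (and $\Ext^n_R(M,N^{\dagger})\cong\Tor^R_{n-d}(M,N)^{\vee}$ for $n>d$, which is not needed here).

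Putting these together with fact (ii): $M\tensor_RN$ is maximal Cohen--Macaulay $\iff\Ext^n_R(M\tensor_RN,\omega_R)=0$ for $n=1,\dots,d$ $\iff\Ext^n_R(M,N^{\dagger})=0$ for $n=1,\dots,d$, which combined with the first paragraph is precisely the desired equivalence. I expect the main obstacle to be the bookkeeping in the spectral sequence when $\pd_R M=\infty$, so that infinitely many $\Tor^R_q(M,N)$ can be nonzero: one must be sure the sequence still converges (it does, since each total degree receives a contribution from only one bidegree) and that the finite-length Tor modules, being trapped in the single column $p=d$, contribute only in cohomological degrees $>d$ and therefore do not perturb the degrees $1,\dots,d$. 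A cleaner alternative avoids the spectral sequence: soft-truncate $F_{\bullet}\tensor_RN$ into its degree-$0$ homology $M\tensor_RN$ and the complement (whose homology consists of the finite-length $\Tor^R_q$, $q\ge 1$), apply $\Hom_R(-,\omega_R)$, and use that a finite-length module dualizes via $\omega_R$ to its Matlis dual placed in cohomological degree $d$; the associated long exact sequence then yields the comparison isomorphisms in degrees $1,\dots,d$ directly.
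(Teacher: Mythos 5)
Your argument is correct, and it reaches the corollary by a genuinely more direct route than the paper. The paper's own proof goes through Theorem \ref{t1}: it first observes (as you do) that for maximal Cohen--Macaulay $M,N$ the depth formula is equivalent to $M\otimes_RN$ being maximal Cohen--Macaulay, but then it proves the degree-by-degree statement ``$\depth(M\otimes_RN)\geq n$ iff $\Ext^i_R(M,N^{\vee})=0$ for $i=d-n+1,\dots,d$'' by induction on $n$, using the pushforward exact sequences $0\to M_{j-1}\to F_j\to M_j\to 0$ to convert the depth condition into torsion-freeness of successive tensor products; the spectral sequence $\Ext^p_R(\Tor^R_q(M,N),\omega_R)\Rightarrow\Ext^{p+q}_R(M,N^{\vee})$ (the paper's Lemma \ref{ExtTor}) is used only in total degrees $>d$, to detect $\Tor_1$ of a pushforward. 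You instead analyze the same spectral sequence in total degrees $\leq d$: the finite length of the higher Tor modules (isolated singularity plus $M$ locally free off $\mathfrak m$) confines $q\geq 1$ to the column $p=d$, so $\Ext^n_R(M,N^{\vee})\cong\Ext^n_R(M\otimes_RN,\omega_R)$ for $1\leq n\leq d$, and then the characterization of maximal Cohen--Macaulayness by vanishing of $\Ext^{\bullet}_R(-,\omega_R)$ in positive degrees (local duality, or \cite[3.3.10]{BH}; if you want to quote local duality verbatim you may complete first, which changes nothing) finishes the proof with no induction and no pushforwards. Your method in fact recovers Corollary \ref{cor1} for every $n$, not just $n=d$, so nothing is lost in generality for the statements at hand; what the paper's inductive argument buys is the slightly more general Theorem \ref{t1}, where $M$ is only assumed to have $\depth(M)\geq n$ rather than be maximal Cohen--Macaulay (though your comparison isomorphism only uses that $M$ is locally free on the punctured spectrum and $N$ is maximal Cohen--Macaulay, so it adapts). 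Two small points: your parenthetical reason for convergence is off --- the spectral sequence converges because the double complex $\Hom_R(F_\bullet\otimes_RN, I^\bullet)$ is concentrated in the strip $0\leq p\leq d$, so each total degree involves only finitely many bidegrees, not because a single $E_2$-entry survives; and note that the maximal Cohen--Macaulayness of $N$ is what makes the abutment equal to $\Ext^{\bullet}_R(M,N^{\vee})$ (it kills $\Ext^{>0}_R(F_q\otimes_RN,\omega_R)$), a hypothesis you do have and should keep visible.
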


Recall that a local ring $(R, \mathfrak m)$ is said to be have an \emph{isolated singularity} provided $R_{p}$ is regular for all prime ideals $p$ of $R$ with $p \neq \mathfrak{m}$. A nonzero finitely generated module $M$ is called a \emph{maximal Cohen-Macaulay} module in case $\depth(M)=\dim(R)$. 

We exploit the main result of Section \ref{Ext} in several directions. For example, Corollary \ref{c6} is a partial extension of a Theorem of Auslander \cite[3.7]{Au} that concerns the torsion-freeness of $M\tensor M^*$ ($M^{\ast}=\Hom(M,R)$) over regular local rings $R$:

\begin{cor} \label{t2} Let $R$ be an even dimensional complete intersection that has an isolated singularity and let $M$ be a maximal Cohen-Macaulay $R$-module. If $\depth(M\tensor M^*)>0$, then $M$ is free.
\end{cor}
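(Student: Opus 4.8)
The plan is to apply the main result of Section~\ref{Ext} to the pair $(M, M^{\ast})$ and then to upgrade the finitely many vanishing $\Ext$ modules so produced to asymptotic vanishing, which forces finite projective dimension; the parity hypothesis enters only at that last step. We may assume $d \geq 2$: if $d = 0$ then $M \otimes_R M^{\ast}$ is a nonzero module (note $M^{\ast} \neq 0$) over an Artinian local ring, hence has depth $0$, so the hypothesis cannot hold. Since $R$ is a complete intersection it is Gorenstein, so $\omega_R \cong R$; consequently the dual $M^{\ast} = \Hom_R(M,R)$ of the maximal Cohen--Macaulay module $M$ is again maximal Cohen--Macaulay, the biduality map $M \to M^{\ast\ast}$ is an isomorphism, and $\Hom_R(M^{\ast}, \omega_R) \cong M^{\ast\ast} \cong M$. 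Moreover $M$ has finite complete intersection dimension, and since $R$ has an isolated singularity $M$ is free on the punctured spectrum, so $\Ext^{i}_R(M,M)$ has finite length for every $i \geq 1$. These are exactly the hypotheses under which Corollary~\ref{cor1} (and behind it Theorem~\ref{t1}) applies to $(M, N) = (M, M^{\ast})$.

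Carrying out that application: because $\Hom_R(M^{\ast}, \omega_R) \cong M$, the $\Ext$--criterion of Corollary~\ref{cor1} for $(M, M^{\ast})$ becomes the vanishing of $\Ext^{i}_R(M,M)$ for $i = 1, \dots, d$, while the depth--formula side reads $\depth(M \otimes_R M^{\ast}) = \depth(R) = d$ (both $M$ and $M^{\ast}$ being maximal Cohen--Macaulay over the $d$--dimensional ring $R$). Feeding the hypothesis $\depth(M \otimes_R M^{\ast}) > 0$ into Theorem~\ref{t1} should then force $\Ext^{i}_R(M,M) = 0$ for $i = 1, \dots, d$; in particular, combining this with the converse direction of Corollary~\ref{cor1}, the value $\depth(M \otimes_R M^{\ast})$ is constrained to be either $0$ or $d$, and we are in the latter case.

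The remaining task is to pass from ``$\Ext^{i}_R(M,M) = 0$ for $1 \leq i \leq d$'' to ``$\Ext^{i}_R(M,M) = 0$ for $i \gg 0$,'' and this is where the evenness of $d$ is decisive. Over an even--dimensional complete intersection with an isolated singularity, Hochster's theta invariant, and its higher--codimension analogues, vanish for every pair of finitely generated modules (by results of Dao and of Moore--Piepmeyer--Spiroff--Walker), which rigidifies the cohomology $\Ext^{\bullet}_R(M,M)$ enough that a run of $d$ consecutive zeros propagates to all large degrees; for a hypersurface this is just the eventual two--periodicity of $\Ext$ together with $\theta \equiv 0$. Once $\Ext^{i}_R(M,M) = 0$ for $i \gg 0$, the theory of cohomological support varieties over complete intersections (Avramov--Buchweitz) gives $\cx_R(M) = 0$, i.e. $\pd_R M < \infty$; since $M$ is maximal Cohen--Macaulay, the Auslander--Buchsbaum formula yields $\pd_R M = \depth R - \depth M = 0$, so $M$ is free.

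I expect this propagation step to be the main obstacle. Producing finitely many vanishing $\Ext$ modules from the depth hypothesis is handled by Theorem~\ref{t1} (and the amount of vanishing one can guarantee degrades as $\depth(M \otimes_R M^{\ast})$ grows, so in the worst case one may have only a short initial run to work with); passing from there to asymptotic vanishing is precisely what demands $d$ even, because for odd--dimensional complete intersections the relevant theta invariant can be nonzero and the conclusion genuinely fails, and outside the hypersurface case one must invoke the vanishing of the higher--codimension theta invariants rather than the elementary periodicity argument.
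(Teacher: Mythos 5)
Your reduction to an $\Ext$-vanishing statement is the right idea, but the middle of the argument contains a genuine error and the final step is a gap. First, you misapply Theorem \ref{t1}/Corollary \ref{cor1}: with $N=M^{\ast}$ and $N^{\vee}\cong M^{\ast\ast}\cong M$ (this identification is fine, since $R$ is Gorenstein and $M$ is maximal Cohen--Macaulay, hence reflexive), the hypothesis $\depth(M\otimes_R M^{\ast})>0$ is exactly the case $n=1$ of the criterion, so it yields only the single vanishing $\Ext^{d}_R(M,M)=0$, \emph{not} $\Ext^{i}_R(M,M)=0$ for $i=1,\dots,d$; the full range $1,\dots,d$ corresponds to $\depth(M\otimes_R M^{\ast})\geq d$, which is essentially what you are trying to prove, so the assertion that the depth is forced to be $0$ or $d$ at that stage is circular. (Your parenthetical remark also has the monotonicity backwards: depth $\geq n$ corresponds to vanishing in degrees $d-n+1,\dots,d$, so the guaranteed vanishing grows, rather than degrades, with the depth, and it sits at the top degrees, not in an initial run.)

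Second, the propagation step you yourself flag as the main obstacle is indeed not an argument as written -- the appeal to theta invariants and their higher-codimension analogues is never turned into a proof that a finite run of vanishing self-extensions propagates to $i\gg 0$ over a complete intersection of arbitrary codimension, and the cited results do not say this off the shelf. More importantly, it is unnecessary. The paper's proof (via Corollary \ref{c6}) uses only the single vanishing obtained above: since $\CI_R(M)<\infty$ (automatic over a complete intersection) and $d$ is even, the Avramov--Buchweitz theorem \cite[4.2]{AvBu} says that $\Ext^{2q}_R(M,M)=0$ for a single $q\geq 1$ already forces $\pd_R(M)<\infty$, and then the Auslander--Buchsbaum formula gives freeness. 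This is precisely where the parity of $d$ enters, and it replaces your entire ``finite run $\Rightarrow$ asymptotic vanishing $\Rightarrow$ trivial support variety'' chain with one citation; your concluding steps ($\cx_R(M)=0$, Auslander--Buchsbaum) are fine but are only reached after the unsupported propagation claim.
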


We also give a short proof of a result of Huneke and Leuschke \cite{HL}; this is a special case of the Auslander-Reiten conjecture \cite{AuRe} for Gorenstein rings (cf. also \cite{Ar}). 

In Proposition \ref{goodDepth} and Example \ref{exDepth}, we discuss a fairly general method to construct nonfree finitely generated modules $M$ and $N$ over certain Cohen-Macaulay normal local domains $R$ such that the tensor product $M\tensor_RN$ has high depth. Our example shows that, in contrast to the well-studied cases over regular or complete intersection rings, good depths of tensor products is a less restrictive phenomenon in general. 

Finally, we apply Theorem \ref{t1} to show that over Veronese subrings of power series rings, the only semi-dualizing modules are free or dualizing. The study of semi-dualizing modules has attracted some attention lately, and our results contribute a new class of rings whose semi-dualizing modules are completely understood.

\section{On the converse of the depth formula}\label{Tor}

This section is devoted to the connection between the depth formula and the vanishing of $\Tor$ modules. We start by recording the following observation:

\begin{lem} \label{l1} Let $R$ be a local ring and let $M$ and $N$ be nonzero finitely generated $R$-modules. Assume $(M,N)$ satisfies the depth formula and that $\depth(R)>\depth(M)$. Set $M'=\syz^{R}_1(M)$. If $\Tor^{R}_{1}(M,N)=0$, then $\depth(M'\otimes_{R}N)=\depth(M\otimes_{R}N)+1$ and hence $(M',N)$ satisfies the depth formula.
\end{lem}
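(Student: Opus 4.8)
The plan is to extract everything from the short exact sequence $\ses{M'}{R^{n}}{M}{}$, where $R^n$ is a free cover of $M$ and $M' = \syz_1^R(M)$. First I would tensor this sequence with $N$. Since $\Tor_1^R(M,N) = 0$ by hypothesis, the long exact sequence in $\Tor$ collapses at the relevant spot and yields the short exact sequence
\[
\ses{M'\tensor_R N}{N^{n}}{M\tensor_R N}{}.
\]
(Here I should note that $M$ has no free direct summand is not needed; even if it did, one can arrange $M'$ to be a genuine first syzygy from a minimal presentation, and the argument is unaffected since we only use the displayed exact sequence.)

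Next I would apply the depth lemma (the standard inequalities relating depths in a short exact sequence) to this sequence. The key numerical input comes from the depth formula applied to the pair $(M,N)$: since $\depth(R) > \depth(M)$, the depth formula forces $\depth(M\tensor_R N) = \depth(M)+\depth(N)-\depth(R) < \depth(N) \le \depth(N^{n})$. Thus in the sequence $\ses{M'\tensor_R N}{N^{n}}{M\tensor_R N}{}$, the first and third terms have strictly smaller depth than the middle term, so the depth lemma gives $\depth(M'\tensor_R N) = \depth(M\tensor_R N) + 1$. This is the first assertion.

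For the second assertion, I would use that $\depth(M') = \depth(M)+1$ — this is itself an instance of the depth lemma applied to $\ses{M'}{R^{n}}{M}{}$, again using $\depth(R) > \depth(M)$ (so that $\depth(R^n) > \depth(M)$, forcing $\depth(M') = \depth(M)+1$). Combining,
\[
\depth(M') + \depth(N) = \depth(M)+1+\depth(N) = \depth(R) + \depth(M\tensor_R N) + 1 = \depth(R) + \depth(M'\tensor_R N),
\]
which is exactly the depth formula for $(M',N)$.

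The only mild subtlety — the "hard part," though it is really just bookkeeping — is making sure all the strict inequalities needed to invoke the equality case of the depth lemma actually hold; these all come down to the single hypothesis $\depth(R) > \depth(M)$ together with the depth formula for $(M,N)$, and once those are in place the two applications of the depth lemma are routine.
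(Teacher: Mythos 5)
Your proposal is correct and follows essentially the same route as the paper: tensor the syzygy sequence with $N$ (using $\Tor^{R}_{1}(M,N)=0$) to get $0\to M'\otimes_R N\to N^{n}\to M\otimes_R N\to 0$, observe that the depth formula together with $\depth(R)>\depth(M)$ gives $\depth(M\otimes_R N)<\depth(N)=\depth(N^{n})$, and then apply the depth lemma to this sequence and to $0\to M'\to R^{n}\to M\to 0$ to get both $\depth(M'\otimes_R N)=\depth(M\otimes_R N)+1$ and $\depth(M')=\depth(M)+1$, whence the depth formula for $(M',N)$. One minor remark: your phrase that the \emph{first} term also has strictly smaller depth than the middle is neither known a priori nor needed --- the equality case of the depth lemma only requires the quotient's depth to be strictly below the middle term's, which is exactly what you verified.
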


\begin{proof} As $\Tor^{R}_{1}(M,N)=0$, one has the following exact sequence
$$\ses {M'\tensor_RN}{F\tensor_RN }{M\tensor_RN},$$
for some free $R$-module $F$. Notice that the assumptions imply $\depth(F\tensor_RN)>\depth(M\tensor_RN)$. Hence counting the depths of the modules in the exact sequence above gives the desired result. 
\end{proof}

We now recall some definitions needed for the rest of the paper.

If $\textbf{F}:\ldots \rightarrow F_{2}\rightarrow F_{1} \rightarrow F_{0}  \rightarrow 0$ is a minimal free resolution of $M$ over $R$, then the rank of $F_{n}$, that is, the integer $\dim_{k}(\Ext^{n}_{R}(M,k))$, is the $n$th \textit{Betti} number $\beta^{R}_{n}(M)$ of $M$. This integer is well-defined for all $n$ since minimal free resolutions over $R$ are unique up to isomorphism. 

$M$ has \emph{complexity} $r$, written as $\cx_{R}(M)=r$, provided $r$ is the least nonnegative integer for which there exists a real number $\gamma$ such that $\beta^{R}_{n}(M)\leq \gamma \cdot n^{r-1}$ for all $n\gg 0$ \cite[3.1]{Av1}. If there are no such $r$ and $\gamma$, then one sets $\cx_{R}(M)=\infty$.

The notion of complexity, which is a homological characteristic of modules, was first introduced by Alperin in \cite{Alp} to study minimal projective resolutions of modules
over group algebras. It was then brought into local algebra by Avramov \cite{Av1}. The complexity of $M$ measures how the Betti sequence $\beta^{R}_{0}(M), \beta^{R}_{1}(M), \dots$ behaves with respect to polynomial growth. In general complexity may be infinite; for example, if $R=k[X,Y]/(X^{2},XY,Y^{2})$, then $\cx_{R}(M)\in \{0,\infty \}$ \cite[4.2.2]{Av2}. It follows from the definition of  complexity that $M$ has finite projective dimension if and only if $\cx_{R}(M)=0$, and has bounded Betti numbers if and only if $\cx_{R}(M)\leq 1$.

A \textit{quasi-deformation} of $R$ \cite{AGP} is a diagram $R \rightarrow S \twoheadleftarrow P$ of local homomorphisms, where $R\rightarrow S$ is flat and $S\twoheadleftarrow P$ is surjective with kernel generated by a regular sequence of $P$ contained in the maximal ideal of $P$. $M$ is said to have finite \textit{complete intersection dimension}, denoted by $\CI_{R}(M)<\infty$, if there
exists a quasi-deformation $R \rightarrow S \twoheadleftarrow P$ such that $\pd_{P}(M\tensor_{R}S)<\infty$. It follows from the definition that modules of finite projective dimension and modules over complete intersection rings have finite complete intersection dimension. There are also local rings $R$ that are not complete intersections, and finitely generated $R$-modules that do not have finite projective dimension but have finite complete intersection dimension (cf. for example \cite[\textnormal{Chapter 4}]{AGP}).

A result of Avramov, Gasharov and Peeva shows that finite complete intersection dimension implies finite complexity; more precisely, if $\CI_{R}(M)<\infty$, then $\cx_{R}(M) \leq \edim(R)-\depth(R)$ \cite[5.6]{AGP}. In particular, if $R$ is a complete intersection, then $\cx_{R}(M)$ cannot exceed $\edim(R)-\dim(R)$, namely the \emph{codimension} of $R$ (cf. also \cite{Gu}).

Assume that the natural map $M \rightarrow M^{\ast\ast}$ is injective. Let $\{f_{1},f_{2},\dots, f_{m}\}$ be a minimal generating set for $M^{\ast}$ and let $\displaystyle{\delta: R^{(m)} \twoheadrightarrow M^{\ast}}$ be defined by $\delta(e_{i})=f_{i}$ for $i=1,2,\dots, m$ where $\{e_{1},e_{2},\dots, e_{m}\}$ is the standard basis for $R^{(m)}$. Then, composing the natural map $M\hookrightarrow M^{\ast\ast}$ with $\delta^{\ast}$, we obtain a short exact sequence
$$\;\;\textnormal{(PF)}\;\;\; 0 \rightarrow M \stackrel{u}{\rightarrow} R^{(m)} \rightarrow M_{1} \rightarrow 0$$ where $u(x)=(f_{1}(x),f_{2}(x),\dots, f_{m}(x))$ for all $x\in M$.
Any module $M_{1}$ obtained in this way is referred to as a \emph{pushforward} of $M$ \cite[Lemma 3.4 and page 49]{EG} (cf. also \cite{HJW}). We should note that such a construction is unique, up to a non-canonical isomorphism (cf. \textnormal{page} 62 of \cite{EG}). 

Throughout the rest of the paper $X^{n}(R)$ denotes the set $\{p\in \text{Spec}(R): \text{depth}(R_{p}) \leq n \}$.

\begin{thm} \label{p1} Let $R$ be a  Cohen-Macaulay local ring and let $M$ and $N$ be nonzero finitely generated $R$-modules. Let $w$ be a nonnegative integer such that $w\leq r$ where $r=\cx_{R}(M)$. Assume:
\begin{enumerate}
\item $\CI_{R}(M)<\infty$.
\item $\Tor_{i}^{R}(M,N)_{p}=0$ for all $i\gg 0$ and for all $p \in X^{r-w}(R)$.
\item $\Tor^{R}_{i}(M,N)=0$ for all $i=1, \dots,  \depth(R)-\depth(M\otimes_{R}N)+w$.
\end{enumerate}
Then $(M,N)$ satisfies the depth formula if and only if $\Tor^{R}_{i}(M,N)=0$ for all $i\geq 1$.
\end{thm}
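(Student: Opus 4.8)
The plan is to prove the nontrivial direction: assuming the depth formula holds (together with hypotheses (1)--(3)), deduce that $\Tor^R_i(M,N) = 0$ for all $i \geq 1$. The converse is exactly Theorem \ref{Choi}. The natural strategy is induction on $\depth(R) - \depth(M \otimes_R N)$, using Lemma \ref{l1} to reduce to the case where this quantity is zero, i.e. where $M \otimes_R N$ is maximal Cohen-Macaulay. Indeed, if $\depth(R) > \depth(M \otimes_R N)$, then hypothesis (3) gives $\Tor^R_1(M,N) = 0$, so Lemma \ref{l1} applies to $M' = \syz^R_1(M)$: the pair $(M',N)$ again satisfies the depth formula, $\depth(M' \otimes_R N) = \depth(M \otimes_R N) + 1$, and one checks that $M'$ inherits hypotheses (1)--(3) (with the same $w$ and the same $r = \cx_R(M') = \cx_R(M)$, since syzygy does not change complexity, and the vanishing in (2)--(3) for $M'$ follows from that for $M$ because $\Tor^R_i(M',N) \cong \Tor^R_{i+1}(M,N)$ for $i \geq 1$). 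Since $\Tor^R_i(M,N) = 0$ for all $i \geq 1$ iff $\Tor^R_i(M',N) = 0$ for all $i \geq 1$, we may assume $\depth(M \otimes_R N) = \depth(R)$.

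So the crux is the base case: $R$ Cohen-Macaulay, $\CI_R(M) < \infty$, $M \otimes_R N$ maximal Cohen-Macaulay, $\Tor^R_i(M,N)_p = 0$ for $i \gg 0$ and all $p \in X^{r-w}(R)$, and $\Tor^R_i(M,N) = 0$ for $i = 1, \dots, w$. We want $\Tor^R_i(M,N) = 0$ for all $i \geq 1$. The key input will be a rigidity-type result for modules of finite complete intersection dimension — the Araya--Yoshino / Auslander--Huneke rigidity theorems — which say that vanishing of a string of consecutive $\Tor$'s of length related to $\cx_R(M)$ forces vanishing of all subsequent $\Tor$'s, at least locally on a suitable punctured spectrum. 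The idea is: hypothesis (2) gives generic/low-height vanishing of high $\Tor$'s; combined with the maximal-Cohen-Macaulay hypothesis on $M \otimes_R N$ and the Auslander--Buchsbaum-type depth bookkeeping, one propagates vanishing from high indices and from small primes down to index $1$ over all of $\Spec(R)$. Concretely, I expect to induct on $\dim(R)$: localize at a minimal prime $p$ of the (conjectured) nonvanishing locus of $\bigoplus_{i \geq 1}\Tor^R_i(M,N)$; if $\height p \leq r - w$ we contradict (2) after invoking rigidity (the finite-length $\Tor$ modules over $R_p$ force them to vanish by a depth count, since $M_p \otimes N_p$ is still MCM-ish), and if $\height p > r - w$ we use the complexity drop $\cx_{R_p}(M_p) \leq \cx_R(M)$ together with the fact that over $R_p$ the relevant $\Tor$'s have finite length, applying a result that finitely many consecutive vanishing $\Tor$'s (here $w$ of them, and $r - w < \height p$ leaves enough room) plus finite length forces total vanishing — then lift back up.

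The main obstacle, I expect, is the precise bookkeeping that makes the count "$\depth(R) - \depth(M \otimes_R N) + w$ consecutive vanishings" exactly sufficient: one must match the length of the guaranteed $\Tor$-vanishing interval against the rigidity threshold ($\approx \cx_R(M) + 1$, or $r - w$ worth of "slack" absorbed by the generic vanishing hypothesis (2)), while simultaneously tracking how the depth formula constrains $\depth$ of the relevant syzygies and tensor products as one localizes. A secondary technical point is verifying that $\CI$ and the local-vanishing hypothesis (2) descend correctly under localization and under passing to $\syz^R_1(M)$; this should follow from \cite{AGP} (localization of quasi-deformations) and the standard isomorphism $\Tor^R_i(\syz^R_1 M, N) \cong \Tor^R_{i+1}(M,N)$, but it needs to be stated carefully so that the inductive hypothesis genuinely applies at each stage.
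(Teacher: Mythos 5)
Your reduction step is fine and matches the paper: induct on $\depth(R)-\depth(M\otimes_RN)$, use Lemma \ref{l1}, and check that hypotheses (1)--(3) pass to $\syz_1^R(M)$ via $\Tor_i^R(\syz_1^RM,N)\cong\Tor_{i+1}^R(M,N)$. The genuine gap is in the base case, where $M$, $N$ and $M\otimes_RN$ are maximal Cohen--Macaulay and you only know $\Tor_i^R(M,N)=0$ for $i=1,\dots,w$. There you appeal to ``a result that finitely many consecutive vanishing Tor's (here $w$ of them) plus finite length forces total vanishing.'' No such result is available: the rigidity theorems you invoke (Araya--Yoshino, Jorgensen \cite[2.6]{Jor}) require a string of $\cx_R(M)+1=r+1$ consecutive vanishing Tor modules (or eventual vanishing, as in \cite[4.2]{ArY}), and since $w\leq r$ you are short by $r-w+1$ vanishings. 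Moreover your localization scheme stalls exactly at the hard case: if the nonvanishing locus of $\bigoplus_{i\geq 1}\Tor_i^R(M,N)$ is $\{\mathfrak m\}$ (all higher Tor's of finite length), then the minimal prime $p$ you pick is $\mathfrak m$ itself, the dimension does not drop, and hypothesis (2) gives no information since $\mathfrak m\notin X^{r-w}(R)$. In that situation the only available leverage is the maximal Cohen--Macaulayness of $M\otimes_RN$, which your sketch never uses in this case.

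That missing leverage is precisely what the paper's proof supplies. Since $M$ is MCM of finite CI-dimension (so $\CI_R(M)=0$ by \cite[1.4]{AGP}), one can iterate the pushforward construction to get exact sequences $\ses{M_{j-1}}{F_j}{M_j}$ with $M_0=M$ and $\CI_R(M_j)=0$. Hypothesis (2) together with \cite[4.2]{ArY} gives $\Tor_i^R(M,N)_p=0$ for all $i\geq 1$ and all $p\in X^{r-w}(R)$; then $\Tor_1^R(M_1,N)$ embeds in the MCM module $M\otimes_RN$ and is not supported on $X^{r-w}(R)$, hence vanishes, and iterating (using that each $M_j\otimes_RN$ stays torsion-free, as in the proof of \cite[2.1]{HJW}) produces the extra vanishings $\Tor_1^R(M_{r-w+1},N)=\dots=\Tor_{r-w+1}^R(M_{r-w+1},N)=0$. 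Combined with the $w$ vanishings from (3) this yields $r+1$ consecutive vanishing Tor's for $(M_{r-w+1},N)$, at which point \cite[2.6]{Jor} applies and the vanishing descends to $(M,N)$. Without this mechanism (or an equivalent way of converting $\depth(M\otimes_RN)=\depth(R)$ into additional consecutive Tor vanishings), your base case does not close.
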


\begin{proof} By Theorem \ref{Choi} it suffices to prove the case where $(M,N)$ satisfies the depth formula. Assume $(M,N)$ satisfies the depth formula and set $n(M,N)= \depth(R)-\depth(M\otimes_{R}N)$. We shall proceed by induction on $n(M,N)$. Since any syzygy of $M$ has finite complete intersection dimension \cite[1.9]{AGP}, in view of Lemma \ref{l1} and the induction hypothesis, it is enough to prove the case where $n(M,N)=0$. Assume now $n(M,N)=0$. Then $M$, $N$ and $M\otimes_{R}N$ are maximal Cohen-Macaulay. We may assume $r>0$; otherwise $M$ will be free by the Auslander-Buchsbaum formula. Since $\CI_{R}(M)=0$ \cite[1.4]{AGP} and $\ds{\CI_{R_{p}}(M_{p})\leq \CI_{R}(M)}$ for each prime ideal $p$ of $R$ \cite[1.6]{AGP}, we have, by (2) and \cite[4.2]{ArY}, that $\Tor_{i}^{R}(M,N)_{p}=0$ for all $i\geq 1$ and for all $p \in X^{r-w}(R)$. This shows, since $r-w\geq 0$, we may assume that $\dim(R)>0$.

As $\CI_{R}(M)=0$, by the pushforward construction (cf. also \cite[Proposition 11]{Mas}), there are exact sequences
$$(\ref{p1}.1)\;\;\; 0 \rightarrow M_{j-1} \rightarrow F_{j} \rightarrow M_{j} \rightarrow 0 $$
where $M_{0}=M$, $\CI_{R}(M_{j})=0$ and $F_{j}$ is a finitely generated free $R$-module, for each positive integer $j$. Assume now $j=1$. Then tensoring $(\ref{p1}.1)$ with $N$ and noting that $\Tor_1^R(M_1,N)$ is not supported in  $X^{r-w}(R)$, we conclude that $\Tor_1^R(M_1,N)=0$. If $j\geq 2$, continuing in a similar fashion, we see $\Tor^{R}_{1}(M_{r-w+1},N)=\dots=\Tor^{R}_{r-w+1}(M_{r-w+1},N)=0$; this follows from the fact that $M_{i}\otimes_{R}N$ is torsion-free for all $i=0,1, \dots, r-w$ (cf. the proof of \cite[2.1]{HJW}). Hence, by (3), we have $\Tor^{R}_{1}(M_{r-w+1},N)=\dots=\Tor^{R}_{r+1}(M_{r-w+1},N)=0$. It now follows from \cite[2.6]{Jor} that $\Tor^{R}_{i}(M_{r-w+1},N)=0$ for all $i\geq 1$. Thus $(\ref{p1}.1)$ shows that $\Tor^{R}_{i}(M,N)=0$ for all $i\geq 1$.
\end{proof}

Theorem \ref{main1} advertised in the introduction now follows rather easily:

\begin{proof}(of Theorem \ref{main1}) We know, by \cite[5.6]{AGP}, that $\cx_R(M)\leq e$. Therefore Proposition \ref{p1} yields the desired conclusion.
\end{proof}

\begin{cor} \label{corintro} Let $R$ be a complete intersection of codimension $c$ and let $M$ and $N$ be nonzero finitely generated $R$-modules. Assume:
\begin{enumerate}
\item $R_{p}$ is regular for each $p\in X^{c}(R)$.
\item $\Tor^{R}_{i}(M,N)=0$ for all $i=1, \dots,  \depth(R)-\depth(M\otimes_{R}N)$.
\end{enumerate}
Then $(M,N)$ satisfies the depth formula if and only if $\Tor^{R}_{i}(M,N)=0$ for all $i\geq 1$.
\end{cor}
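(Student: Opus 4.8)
The plan is to deduce Corollary \ref{corintro} as a special case of Theorem \ref{main1} (equivalently, of Proposition \ref{p1} with $w=0$). First I would observe that a complete intersection $R$ is Cohen-Macaulay, so $\dim(R)=\depth(R)$, and hence $e=\edim(R)-\depth(R)=\edim(R)-\dim(R)=c$, the codimension of $R$. Thus hypothesis (1) here, that $R_p$ is regular for each $p\in X^c(R)$, is exactly hypothesis (2) of Theorem \ref{main1}: indeed, over a Cohen-Macaulay ring $X^c(R)=\{p\in\Spec(R):\depth(R_p)\le c\}=\{p:\height(p)\le c\}$, since $\depth(R_p)=\dim(R_p)=\height(p)$ for all $p$.

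Next I would check the remaining hypotheses of Theorem \ref{main1}. Condition (1) there asks that $M$ have finite complete intersection dimension; this is automatic because $R$ is a complete intersection (as noted in the discussion of quasi-deformations in Section \ref{Tor}, modules over complete intersection rings have finite complete intersection dimension). Condition (3) there, the vanishing of $\Tor^R_i(M,N)$ for $i=1,\dots,\depth(R)-\depth(M\otimes_R N)$, is precisely hypothesis (2) of the corollary. So all three hypotheses of Theorem \ref{main1} are met, and the conclusion — that $(M,N)$ satisfies the depth formula if and only if $\Tor^R_i(M,N)=0$ for all $i\ge 1$ — follows at once.

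There is essentially no obstacle here; the only points requiring a word of care are the two identifications just made, namely that $e=c$ and that $X^c(R)$ consists of the primes of height at most $c$, both of which rest on the Cohen-Macaulay property via the equalities $\depth(R_p)=\dim(R_p)=\height(p)$. One could alternatively phrase the proof directly in terms of Proposition \ref{p1}: take $w=0$, use \cite[5.6]{AGP} to get $\cx_R(M)\le\edim(R)-\depth(R)=c$ so that $X^{r}(R)\subseteq X^{c}(R)$ and hypothesis (1) of the corollary forces $R_p$ regular (hence $\Tor^R_i(M,N)_p=0$ eventually, in fact for all $i\ge1$) on all of $X^{r}(R)$, giving condition (2) of Proposition \ref{p1}; condition (3) of Proposition \ref{p1} with $w=0$ is hypothesis (2) of the corollary.

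\begin{proof}(of Corollary \ref{corintro}) Since $R$ is a complete intersection, it is Cohen-Macaulay, $M$ has finite complete intersection dimension, and $e:=\edim(R)-\depth(R)=\edim(R)-\dim(R)=c$. Moreover, as $R$ is Cohen-Macaulay, $\depth(R_{p})=\height(p)$ for every $p\in\Spec(R)$, so $X^{c}(R)=\{p\in\Spec(R):\height(p)\le c\}$. Hence hypotheses (1) and (2) of the corollary are exactly hypotheses (2) and (3) of Theorem \ref{main1}, and hypothesis (1) of Theorem \ref{main1} holds automatically. The conclusion is now immediate from Theorem \ref{main1}.
\end{proof}
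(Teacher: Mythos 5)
Your proof is correct and is essentially the paper's own (implicit) argument: Corollary \ref{corintro} is stated as an immediate consequence of Theorem \ref{main1} (equivalently, Proposition \ref{p1} with $w=0$), using exactly the identifications you make, namely $e=\edim(R)-\depth(R)=\edim(R)-\dim(R)=c$ and $X^{c}(R)=\{p:\height(p)\le c\}$ via the Cohen--Macaulay equality $\depth(R_{p})=\height(p)$, together with the fact that modules over a complete intersection automatically have finite complete intersection dimension.
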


It follows from Corollary \ref{corintro} and \cite[4.7 \textnormal{and} 6.1]{AvBu} that:

\begin{cor} Let $R$ be a complete intersection of codimension $c$ and let $M$ and $N$ be maximal Cohen-Macaulay $R$-modules. Assume $R_{p}$ is regular for each $p\in X^{c}(R)$. Then the following are equivalent:
\begin{enumerate}
\item $M\otimes_{R}N$ is maximal Cohen-Macaulay.
\item $\Tor^{R}_{i}(M,N)=0$ for all $i\geq 1$. 
\item $\Ext^{i}_{R}(M,N)=0$ for all $i\geq 1$.
\item $\Ext^{i}_{R}(N,M)=0$ for all $i\geq 1$.
\end{enumerate}
\end{cor}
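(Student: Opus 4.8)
The plan is to prove the equivalence in two stages: first $(1)\Leftrightarrow(2)$, which is a formal consequence of Corollary~\ref{corintro} and Theorem~\ref{Choi} once one observes that the maximal Cohen--Macaulay hypotheses force the depth formula to coincide with condition $(1)$; then $(2)\Leftrightarrow(3)\Leftrightarrow(4)$, deduced from the first stage together with the theory of cohomology operators and support varieties over complete intersections \cite{AvBu}.

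For $(1)\Leftrightarrow(2)$: since $M$ and $N$ are maximal Cohen--Macaulay over the Cohen--Macaulay ring $R$ we have $\depth M=\depth N=\dim R=\depth R$, and therefore
$$\depth M+\depth N=\depth R+\depth(M\tensor_RN)\quad\Longleftrightarrow\quad \depth(M\tensor_RN)=\dim R\quad\Longleftrightarrow\quad (1),$$
i.e. $(M,N)$ satisfies the depth formula if and only if $(1)$ holds. If $(1)$ holds then $\depth R-\depth(M\tensor_RN)=0$, so hypothesis $(2)$ of Corollary~\ref{corintro} holds vacuously, while hypothesis $(1)$ of that corollary is precisely our standing assumption that $R_p$ is regular for all $p\in X^c(R)$; hence Corollary~\ref{corintro} applies and yields $\Tor_i^R(M,N)=0$ for all $i\geq1$, which is $(2)$. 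Conversely, if $(2)$ holds then $M$ has finite complete intersection dimension (as $R$ is a complete intersection), so Theorem~\ref{Choi} gives the depth formula, i.e. $(1)$.

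For $(2)\Leftrightarrow(3)\Leftrightarrow(4)$: by \cite[5.6]{AGP} we have $\cx_R(M)\leq\edim R-\depth R=c$. Over a complete intersection the vanishing of $\Tor$ and of $\Ext$ are governed by the same invariant: by \cite[4.7 and 6.1]{AvBu} the asymptotic vanishing of $\Tor_i^R(M,N)$, of $\Ext_R^i(M,N)$ and of $\Ext_R^i(N,M)$ are mutually equivalent, the bridge being the common support variety $V^{*}_R(M,N)=V^{*}_R(N,M)$. Moreover $M$ and $N$, being maximal Cohen--Macaulay over the Gorenstein ring $R$, are totally reflexive, so ordinary $\Ext$ in positive degrees coincides with Tate cohomology $\widehat{\Ext}_R^{\ast}$; using the shift behaviour of $\widehat{\Ext}$ under syzygies and pushforwards, the bound $\cx_R(M)\leq c$, and the hypothesis that $R_p$ is regular for $p\in X^c(R)$, one upgrades these asymptotic statements to vanishing for all $i\geq1$. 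This makes $(2)$, $(3)$ and $(4)$ equivalent to one another (and, through the first stage and Theorem~\ref{Choi}, to $(1)$).

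The delicate step is exactly this last upgrade from ``$i\gg0$'' to ``all $i\geq1$'': over a regular local ring it already fails, since for a regular sequence $x,y$ the modules $R/(x)$ and $R/(y)$ satisfy $\Tor_i^R(R/(x),R/(y))=0$ for all $i\geq1$ yet $\Ext_R^1(R/(x),R/(y))\neq0$. Thus the maximal Cohen--Macaulay hypothesis (which guarantees total reflexivity, hence a well-behaved two-sided Tate theory) and the regularity of $R$ in codimension $c=\cx_R(M)$ must both be used essentially; carrying this out through the structure of $\widehat{\Ext}_R^{\ast}(M,N)$ as a finitely generated module over the ring of cohomology operators is the technical heart of the proof.
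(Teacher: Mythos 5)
Your first stage is exactly the paper's argument: since $M$ and $N$ are maximal Cohen--Macaulay, the depth formula is equivalent to (1), the Tor-vanishing hypothesis of Corollary \ref{corintro} is vacuous when (1) holds, and Theorem \ref{Choi} gives the converse. This part is correct.

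The gap is in the second stage. What you actually establish there is only the asymptotic equivalence of the vanishing of $\Tor_i^R(M,N)$, $\Ext^i_R(M,N)$ and $\Ext^i_R(N,M)$ for $i\gg 0$ (the Avramov--Buchweitz symmetry); the passage to vanishing for \emph{all} $i\geq 1$ is merely announced (``one upgrades these asymptotic statements'') and deferred as ``the technical heart,'' so the equivalence of (2), (3), (4) is never actually proved. Moreover, the route you sketch for that upgrade is off target in two ways. First, the hypothesis that $R_p$ is regular for $p\in X^{c}(R)$ plays no role in this stage: it is needed only for $(1)\Rightarrow(2)$, i.e.\ for the converse-depth-formula machinery of Corollary \ref{corintro}; the equivalence of (2), (3) and (4) holds for maximal Cohen--Macaulay modules over any complete intersection. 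Second, no delicate Tate-cohomology argument is required: the precise, non-asymptotic statement is exactly what the paper's citation of \cite[4.7]{AvBu} supplies, namely that over a complete intersection eventual vanishing of $\Ext^i_R(M,N)$ forces $\Ext^i_R(M,N)=0$ for all $i>\depth(R)-\depth(M)$, and this threshold is $0$ here because $M$ (respectively $N$, for condition (4)) is maximal Cohen--Macaulay; on the Tor side, $\Tor_i^R(M,N)=0$ for all $i\geq 1$ follows from eventual vanishing either by the companion bound in \cite{AvBu} or by applying the depth formula (Theorem \ref{Choi}) to a high syzygy and counting depths, again using only that $M$ and $N$ are maximal Cohen--Macaulay. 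Your ``failure over a regular local ring'' example with $R/(x)$ and $R/(y)$ involves modules that are not maximal Cohen--Macaulay, so it does not exhibit any difficulty with the statement at hand; as written, however, your proof of $(2)\Leftrightarrow(3)\Leftrightarrow(4)$ is incomplete, whereas the paper's intended argument closes it by a direct appeal to \cite[4.7 and 6.1]{AvBu}.
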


\section{Depth of $M\otimes_{R}N$ and the vanishing of $\Ext_{R}^{i}(M,N)$}\label{Ext}

In this section we investigate the connection between the depth of $M\otimes_{R}N$ and the vanishing of certain $\Ext$ modules. We illustrate this connection by giving a number of applications, including another proof of a special case of the Auslander-Reiten conjecture, which was first proved by Huneke and Leuschke in \cite{HL}. We also extend a result of Auslander (on the torsion-freeness of $M\otimes_{R}M^{\ast}$) to hypersurface singularities and discuss its possible extension to complete intersections. We finish this section by showing a method to construct non-trivial examples of modules $M$ and $N$ such that $M\tensor_RN$ has high depth. 

If $R$ has a canonical module $\omega_{R}$, we set $M^{\vee}=\Hom_R(M,\omega_{R})$. $M$ is said to be locally free on $U_{R}$ provided $M_{p}$ is a free $R_{p}$-module for all $p\in X^{d-1}(R)$ where $d=\dim(R)$ (Recall that $X^{n}(R)=\{p\in \text{Spec}(R): \text{depth}(R_{p}) \leq n \}$).

Now we follow the proof of \cite[3.10]{Yo} and record a useful lemma:

\begin{lem}\label{ExtTor}
Let $R$ be a $d$-dimensional local Cohen-Macaulay ring with a canonical module $\omega_{R}$ and let $M$ and $N$ be finitely generated  $R$-modules. Assume $M$ is locally free on $U_R$ and $N$ is maximal Cohen-Macaulay. Then the following isomorphism holds for all positive integers $i$:
$$ \Ext_R^{d+i}(M,N^{\vee}) \cong \Ext_{R}^d(\Tor_i^R(M,N),\omega_{R})$$
\end{lem}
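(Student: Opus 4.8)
The plan is to prove this by a spectral sequence argument, following the cited proof of \cite[3.10]{Yo}. The key input is that since $M$ is locally free on $U_R$, the modules $\Tor_i^R(M,N)$ have finite length for all $i \geq 1$, hence are supported only at $\mathfrak{m}$, so local duality applies cleanly to them.

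First I would take a projective resolution $P_\bullet \to M$ and consider the complex $P_\bullet \otimes_R N$, whose homology is $\Tor_\bullet^R(M,N)$. Applying $\Hom_R(-, \omega_R)$ to $P_\bullet \otimes_R N$ gives a complex computing, on one hand, $\Ext^\bullet_R(M, \Hom_R(N, \omega_R)) = \Ext^\bullet_R(M, N^\vee)$ when we use the fact that $\Hom_R(P_\bullet \otimes_R N, \omega_R) \cong \Hom_R(P_\bullet, \Hom_R(N, \omega_R))$ (Hom-tensor adjunction, with $P_i$ free). There is a hyperderived functor / spectral sequence
$$ E_2^{p,q} = \Ext^p_R(\Tor_q^R(M,N), \omega_R) \Longrightarrow \Ext^{p+q}_R(M, N^\vee). $$
The point now is to show this spectral sequence degenerates to the stated isomorphism. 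Since $\omega_R$ is a canonical module of a Cohen-Macaulay ring of dimension $d$, it has injective dimension $d$, so $E_2^{p,q} = 0$ for $p > d$. For $q \geq 1$, the module $\Tor_q^R(M,N)$ has finite length (because $M$ is locally free on the punctured spectrum, so these Tor's vanish after localizing at any non-maximal prime), hence $\Ext^p_R(\Tor_q^R(M,N), \omega_R) = 0$ for all $p < d$ as well — a finite length module over a Cohen-Macaulay ring with canonical module has grade exactly $d$, so only $\Ext^d$ survives. Therefore for $q \geq 1$ the only nonzero entry in row $q$ is $E_2^{d,q} = \Ext^d_R(\Tor_q^R(M,N), \omega_R)$.

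With this vanishing, essentially all differentials in or out of the entries $E_r^{d,q}$ for $q \geq 1$ are forced to be zero: an incoming differential would come from $E_r^{d-r, q+r-1}$ which is zero once $r \geq 2$ and $d - r < d$ kills it unless $d-r$... one must be slightly careful when $q+r-1 = 0$, but that requires $q \leq 1$; for $q \geq 1$ and $r \geq 2$ the source has second index $q + r - 1 \geq 2 \geq 1$ so it is a finite-length-Tor contribution living only in column $d$, and $d - r < d$, so it vanishes. Outgoing differentials land in $E_r^{d+r, q-r+1}$ which vanishes since $d + r > d$. Hence $E_\infty^{d,q} = E_2^{d,q}$ for $q \geq 1$, and moreover for fixed total degree $n = d + i$ with $i \geq 1$, the only possibly-nonzero $E_\infty^{p,q}$ with $p + q = n$ is $(p,q) = (d,i)$: indeed $q = 0$ gives $p = d+i > d$, dead; $1 \leq q < i$ gives $p = d + (i-q) > d$, dead; $q = i$ gives $(d,i)$; $q > i$ gives $p = d - (q-i) < d$ with $q \geq 1$, dead. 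So the filtration on $\Ext^{d+i}_R(M, N^\vee)$ has a single nonzero quotient, giving $\Ext^{d+i}_R(M, N^\vee) \cong E_2^{d,i} = \Ext^d_R(\Tor_i^R(M,N), \omega_R)$.

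The main obstacle, and the only place requiring genuine care, is justifying the finite-length claim for $\Tor_i^R(M,N)$ with $i \geq 1$ and the consequent concentration of $\Ext^\bullet_R(\Tor_i^R(M,N),\omega_R)$ in degree $d$: one needs $M_p$ free over $R_p$ for every non-maximal prime $p$ (so $\Tor_i^R(M,N)_p = 0$), which is exactly the hypothesis that $M$ is locally free on $U_R$ together with $N$ being maximal Cohen-Macaulay ensuring the relevant supports behave; and then the standard fact that over a Cohen-Macaulay local ring with canonical module, a nonzero finite-length module has $\Ext^j_R(-,\omega_R) = 0$ for $j \neq d$ (local duality, $\Ext^d_R(-,\omega_R)$ being Matlis duality on finite length modules). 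Once that is in hand, the spectral sequence collapse is automatic from the injective dimension bound on $\omega_R$. I would also remark that the isomorphism can alternatively be obtained directly by dimension-shifting: replacing $M$ by a high syzygy reduces to comparing $\Ext^{d+1}$ with $\Ext^d(\Tor_1, \omega_R)$, but the spectral sequence packages this uniformly.
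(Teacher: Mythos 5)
Your proof is correct and is essentially the paper's own argument: the paper invokes the same spectral sequence $\Ext^p_R(\Tor_q^R(M,N),\omega_R)\Rightarrow\Ext^{p+q}_R(M,N^{\vee})$ (citing Rotman rather than building the double complex by hand, with $N$ maximal Cohen--Macaulay guaranteeing the abutment via $\Ext^{>0}_R(N,\omega_R)=0$), and collapses it using the finite length of $\Tor_q^R(M,N)$ for $q\geq 1$ and $\operatorname{injdim}\omega_R=d$, exactly as you do. Your write-up just makes the degeneration and the identification of the abutment more explicit than the paper's two-line version.
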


\begin{proof} By \cite[10.62]{Roit} there is a third quadrant spectral sequence:
$$\Ext^p_{R}(\Tor_q^R(M,N),\omega_{R}) \underset{p}{\Longrightarrow} \Ext_R^n(M,N^{\vee}) $$
Since $M$ is locally free on $U_R$, $\Tor_q^R(M,N)$ has finite length for all $q>0$. Therefore, unless $p=d$, $\Ext^p_{R}(\Tor_q^R(M,N),\omega_{R})=0$. It follows that the spectral sequence considered collapses and hence gives the desired isomorphism.
\end{proof}

Before we prove the main result of this section, we recall some relevant definitions and facts. An $R$-module $M$ is said to satisfy \emph{Serre's condition} $(S_{n})$ if $\depth_{R_{p}}(M_{p})\geq \text{min}\left\{n, \dim(R_{p})\right\}$ for all $p\in \Spec(R)$ \cite{EG}. If $R$ is Cohen-Macaulay, then $M$ satisfies $(S_{n})$ if and only if every $R$-regular sequence $x_{1},x_{2},\dots, x_{k}$, with $k\leq n$, is also an $M$-regular sequence \cite{Sam}. In particular, if $R$ is Cohen-Macaulay, then $M$ satisfies $(S_{1})$ if and only if it is torsion-free. Moreover, if $R$ is Gorenstein, then $M$ satisfies $(S_{2})$ if and only if it is reflexive, that is, the natural map $M \rightarrow M^{\ast\ast}$ is bijective (cf. \cite[3.6]{EG}). More generally, over a local Gorenstein ring, $M$ satisfies $(S_{n})$ if and only if it is an $n$th syzygy (cf.  \cite{Mas}).

We now improve a theorem of Huneke and Jorgensen \cite{HJ}; A special case of Theorem \ref{t1}, namely the case where $R$ is Gorenstein and $n=\dim(R)$, was proved in \cite[5.9]{HJ} by different techniques (cf. also \cite[4.6]{HW1}). 

\begin{thm} \label{t1} Let $R$ be a $d$-dimensional local Cohen-Macaulay ring with a canonical module $\omega_R$ and let $M$ and $N$ be finitely generated $R$-modules. Assume;
\begin{enumerate}
\item There exists an integer $n$ such that $1\leq n \leq \depth(M)$.
\item $M$ is locally free on $U_R$.
\item  $N$ is maximal Cohen-Macaulay.
\end{enumerate}
Then $\depth(M\otimes_{R}N)\geq n$ if and only if $\Ext^i_R(M,N^{\vee})=0$ for all $i=d-n+1,\dots, d-1,d$.
\end{thm}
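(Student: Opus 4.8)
The plan is to reduce the statement to a vanishing statement about the local cohomology of $M \otimes_R N$ and then convert that, via the lemma just established, into a statement about $\Ext^i_R(M,N^\vee)$. First I would observe that since $N$ is maximal Cohen-Macaulay and $M$ is locally free on $U_R$, the tensor product $M \otimes_R N$ is locally free of finite rank on the punctured spectrum away from $X^{d-1}(R)$; more to the point, $\Tor_i^R(M,N)$ has finite length for all $i \geq 1$, which is exactly the hypothesis feeding Lemma \ref{ExtTor}. The key computational input will be a comparison of depths (equivalently, of the vanishing ranges of local cohomology modules $\LC^j_{\mathfrak m}(-)$) along the truncations of a free resolution of $M$.

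The main step is an induction on $n$ using a short exact sequence coming from the start of a minimal free resolution of $M$. Write $\ses{M'}{F}{M}$ with $F$ free and $M' = \syz^R_1(M)$; note $M'$ is again locally free on $U_R$ and $\depth(M') \geq \min\{\depth(M)+1, d\} \geq n+1$ when $n < d$. Tensoring with $N$ gives an exact sequence relating $M \otimes_R N$, $F \otimes_R N$ (which is maximal Cohen-Macaulay since $N$ is), and $M' \otimes_R N$, with the connecting term $\Tor_1^R(M,N)$ of finite length. Chasing local cohomology around this sequence, one sees that for $j$ in an appropriate range, $\depth(M\otimes_R N) \geq n$ is controlled by $\depth(M'\otimes_R N) \geq n+1$ together with the vanishing of the finite-length module $\Tor_1^R(M,N)$ in the relevant degree — and by Lemma \ref{ExtTor}, $\Ext^d_R(\Tor_1^R(M,N),\omega_R) \cong \Ext^{d+1}_R(M,N^\vee)$, while finite length plus Matlis duality lets one detect $\Tor_1^R(M,N) = 0$ from that single $\Ext$. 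Iterating, $\depth(M \otimes_R N) \geq n$ becomes equivalent to the simultaneous vanishing of $\Tor_i^R(M,N)$ for $i = 1, \dots, d-n$ together with the base case $n = d$ (when $M \otimes_R N$ is maximal Cohen-Macaulay). Then Lemma \ref{ExtTor} translates "$\Tor_i^R(M,N) = 0$ for $i = 1,\dots,d-n$, and $M\otimes_R N$ has enough depth to be MCM after killing those" precisely into "$\Ext^i_R(M,N^\vee) = 0$ for $i = d-n+1, \dots, d$."

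For the base case $n = d$: here $\depth(M \otimes_R N) \geq d$ means $M \otimes_R N$ is maximal Cohen-Macaulay. I would argue that, since everything in sight away from $\mathfrak m$ is already fine, $M \otimes_R N$ being MCM forces $\Tor_i^R(M,N) = 0$ for all $i \geq 1$ (e.g.\ by a depth count on a free resolution of $M$ tensored with the MCM module $N$, using that the finite-length $\Tor$'s obstruct depth), and conversely vanishing of all $\Tor_i$ gives the depth formula and hence $\depth(M\otimes_R N) = d$ by Theorem \ref{Choi} applied suitably — though more directly one just needs $\depth(M\otimes_R N)\ge d$, which follows from the rigidity-type argument. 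Then Lemma \ref{ExtTor} with $\Tor_i^R(M,N) = 0$ for $i = 1,\dots,d$ gives $\Ext^{d+1}_R(M,N^\vee) = \dots = $ nothing useful directly, so the honest route is the inductive descent above rather than treating $n = d$ in isolation; I would organize the whole proof as a single downward/upward induction on $n$ with the short exact sequence as the engine.

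The step I expect to be the main obstacle is the bookkeeping in the depth chase around $\ses{M'\otimes_R N}{F\otimes_R N}{M\otimes_R N}$: one must be careful that the connecting homomorphism in local cohomology behaves as expected given that $\Tor_1^R(M,N)$ has finite length (so its only nonzero local cohomology is $\LC^0_{\mathfrak m}$), and that $F \otimes_R N$ contributes nothing to $\LC^j_{\mathfrak m}$ for $j < d$. Getting the precise equivalence of ranges — matching $i = d-n+1,\dots,d$ on the $\Ext$ side with $i = 1,\dots,d-n$ on the $\Tor$ side via the shift $\Ext^{d+i}_R(M,N^\vee) \cong \Ext^d_R(\Tor_i^R(M,N),\omega_R)$ — is where an off-by-one error is most likely, so I would verify it carefully in the $d = 1$ and $d = 2$ cases first.
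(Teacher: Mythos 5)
There is a genuine gap, and it lies in the engine of your induction. You run the induction along a \emph{syzygy} sequence $0\to M'\to F\to M\to 0$ with $M'=\syz^R_1(M)$, but tensoring this with $N$ places the finite-length module $\Tor_1^R(M,N)$ inside $M'\otimes_RN$, not inside $M\otimes_RN$: the four-term sequence is $0\to \Tor_1^R(M,N)\to M'\otimes_RN\to F\otimes_RN\to M\otimes_RN\to 0$. Hence $\depth(M\otimes_RN)\geq n$ gives you no control over $\Tor_1^R(M,N)$ (you know nothing a priori about $\depth(M'\otimes_RN)$), so the forward direction of your claimed equivalence ``$\depth(M\otimes_RN)\geq n$ iff $\depth(M'\otimes_RN)\geq n+1$ and $\Tor_1^R(M,N)=0$'' is unjustified; only the backward direction follows from depth counting. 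In fact the iterated version, ``$\depth(M\otimes_RN)\geq n$ iff $\Tor_i^R(M,N)=0$ for $i=1,\dots,d-n$ plus an MCM base case,'' is false: in Example \ref{e3} one has a $3$-dimensional hypersurface $S$ with isolated singularity and a nonfree MCM module $N$ with $\depth_S(N\otimes_SN^{\ast})=1$, yet $\Tor_1^S(N,N^{\ast})$ and $\Tor_2^S(N,N^{\ast})$ cannot both vanish (rigidity over hypersurfaces plus the depth formula would force $N\otimes_SN^{\ast}$ to be reflexive and then $N$ free by Theorem \ref{Aus}). The same confusion shows up in your degree bookkeeping: Lemma \ref{ExtTor} only reaches $\Ext_R^{d+i}(M,N^{\vee})$ for $i\geq 1$, i.e.\ degrees strictly above $d$, which are Matlis duals of $\Tor_i^R(M,N)$; the theorem is about $\Ext_R^{i}(M,N^{\vee})$ in degrees $d-n+1,\dots,d$, and these are \emph{not} duals of any $\Tor_i^R(M,N)$. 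Your base case $n=d$ (``$M\otimes_RN$ MCM forces all $\Tor_i^R(M,N)=0$ by a depth count'') is unsound for the same reason and, without finite projective or CI-dimension, is not a true statement in this generality--- that converse is exactly what Section \ref{Tor} needs extra hypotheses to prove.

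What the paper does instead, and what your argument is missing, is to go the \emph{other} way along resolutions: since $M$ is torsion-free (locally free on $U_R$ and of positive depth), the pushforward construction yields sequences $0\to M_{j-1}\to F_j\to M_j\to 0$ with $M_0=M$, so that $M$ is a first syzygy of $M_1$. Tensoring \emph{that} sequence with $N$ puts the finite-length module $\Tor_1^R(M_1,N)$ inside $M\otimes_RN$, so torsion-freeness of $M\otimes_RN$ is equivalent to $\Tor_1^R(M_1,N)=0$, which by Lemma \ref{ExtTor} and local duality is equivalent to $\Ext_R^{d+1}(M_1,N^{\vee})\cong\Ext_R^{d}(M,N^{\vee})=0$; the syzygy relation between $M$ and $M_1$ is precisely what shifts the high-degree $\Ext$ of $M_1$ down into the range $\leq d$ for $M$. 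The induction on $n$ then runs through $M_1$ (with the claim that $\depth(M\otimes_RN)\geq n$ iff $\depth(M\otimes_RN)\geq 1$ and $\depth(M_1\otimes_RN)\geq n-1$), not through $\syz^R_1(M)$. To repair your proposal you would have to replace your syzygy sequence by the pushforward sequence throughout; as written, the central equivalence your induction rests on does not hold.
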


\begin{proof} Note that $M$ is a torsion-free $R$-module; it is locally free on $U_R$ and has positive depth. Hence it follows from \cite[1.4.1(a)]{BH} (cf. also the proof of \cite[3.5]{EG}) that the natural map $M \rightarrow M^{\ast\ast}$ is injective. Therefore, by the pushforward construction (cf. section \ref{Tor}), there are exact sequences
$$(\ref{t1}.1)\;\;\; 0 \rightarrow M_{j-1} \rightarrow F_{j} \rightarrow M_{j} \rightarrow 0 $$
where $M_{0}=M$, $F_{j}$ is a finitely generated free $R$-module and $\depth(M_j) \geq \depth(M)-j$ for all $j=1,\dots,\depth(M)$. Furthermore it is clear from the construction that each $M_j$ is also locally free on $U_R$.

We now proceed by induction on $n$. As $N$ is maximal Cohen-Macaulay and $M$ is locally free on $U_R$, $\depth(M\otimes_{R}N)\geq n$ if and only if $M\otimes_{R}N$ satisfies Serre's condition $(S_{n})$. First assume $n=1$. We will prove that $M\otimes_{R}N$ is torsion-free if and only if $\Ext^d_R(M,N^{\vee})=0$. Consider the short exact sequence: 
$$(\ref{t1}.2)\;\;\;  \ses{M}{F_1}{M_1}$$
Tensoring (\ref{t1}.2) with $N$, we see that $M\tensor_RN$ is torsion-free if and only if $\Tor_1^R(M_{1},N)=0$. Therefore, by Lemma \ref{ExtTor} and \cite[3.5.8]{BH}, $M\otimes_{R}N$ is torsion-free if and only if $\Ext_R^{d+1}(M_{1}, N^{\vee})\cong \Ext_R^{d}(M, N^{\vee})=0$. This proves, in particular, the case where $d=1$. Hence we may assume $d\geq 2$ for the rest of the proof.

Now assume $n>1$. We claim that the following are equivalent: 
$$\text{(i)} \depth(M\tensor_RN)\geq n \text{, (ii)} \depth (M\tensor_RN)\geq 1 \text{ and} \depth (M_1\tensor_RN)\geq n-1.$$ 
Note that, if either (i) or (ii) holds, then $M\otimes_{R}N$ is torsion-free and hence $\Tor_1^R(M_{1},N)=0$. Thus we have the following exact sequence:
$$(\ref{t1}.3)\;\;\; \ses{M\tensor_RN}{F_1\tensor_RN}{M_1\tensor_RN} $$ 
Now it is clear that (i) implies (ii) since the module $F_1\tensor_RN$ in (\ref{t1}.3) is maximal Cohen-Macaulay. Similarly, by counting the depths of the modules in (\ref{t1}.3), we see that (ii) implies (i). Hence, by our claim and the induction hypothesis, $\depth (M\tensor_RN)\geq n$ if and only if $\Ext_R^{d}(M, N^{\vee})=0$ \emph{and} 
$\Ext_R^{i}(M_1, N^{\vee})=0$ for all $i=d-n+2,\dots, d-1,d$.  Since $M$ is a first syzygy of $M_1$, we are done. 
\end{proof}

\begin{cor}\label{cor1} Let $R$ be a $d$-dimensional local Cohen-Macaulay ring with a canonical module $\omega_R$ and let $M$ and $N$ be maximal Cohen-Macaulay $R$-modules. Assume $M$ is locally free on $U_R$ (e.g., $R$ has an isolated singularity) and $n$ is an integer such that $1\leq n\leq d$. Then $\depth(M\otimes_{R}N)\geq n$ if and only if $\Ext^i_R(M,N^{\vee})=0$ for all $i=d-n+1,\dots, d-1,d$.
\end{cor}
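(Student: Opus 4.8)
\textbf{Proof proposal for Corollary \ref{cor1}.}

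The plan is to derive Corollary \ref{cor1} directly from Theorem \ref{t1}, since the corollary is essentially the special case of the theorem obtained by taking $M$ to be maximal Cohen-Macaulay (so that one may run $n$ all the way up to $d$). First I would record that if $R$ has an isolated singularity, then $M_p$ is free for every non-maximal prime $p$, and in particular for every $p\in X^{d-1}(R)$ (since $R$ is Cohen-Macaulay, any such $p$ with $\depth(R_p)\le d-1$ has height $\le d-1$, hence $p\ne\mathfrak m$); thus $M$ is automatically locally free on $U_R$, which justifies the parenthetical remark. Next I would observe that $M$ being maximal Cohen-Macaulay means $\depth(M)=d$, so for any $n$ with $1\le n\le d$ we have $1\le n\le\depth(M)$, i.e., hypothesis (1) of Theorem \ref{t1} is satisfied. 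Hypotheses (2) and (3) of the theorem are exactly ``$M$ locally free on $U_R$'' and ``$N$ maximal Cohen-Macaulay,'' both assumed here. Therefore Theorem \ref{t1} applies verbatim and gives: $\depth(M\otimes_R N)\ge n$ iff $\Ext^i_R(M,N^{\vee})=0$ for all $i=d-n+1,\dots,d$. That is the assertion of the corollary.

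The only genuinely substantive point is the harmless but necessary check in the isolated-singularity case that an isolated singularity forces local freeness on $U_R$; this is where I would be slightly careful, since $U_R$ is defined via the locus $\{p:\depth(R_p)\le d-1\}$ rather than directly via regularity. But over a Cohen-Macaulay ring, $\depth(R_p)=\dim(R_p)=\height(p)$, so $\depth(R_p)\le d-1$ is equivalent to $\height(p)\le d-1$, which forces $p\ne\mathfrak m$; isolated singularity then gives $R_p$ regular, and a finitely generated module over a regular local ring of finite projective dimension is free only if\dots—actually one needs $M_p$ itself to be free, which holds because $M$ is maximal Cohen-Macaulay, hence $M_p$ is a maximal Cohen-Macaulay $R_p$-module over the regular local ring $R_p$, and maximal Cohen-Macaulay modules over regular local rings are free (Auslander--Buchsbaum). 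So the local-freeness hypothesis is genuinely automatic.

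I expect no real obstacle here: the corollary is a direct specialization, and the whole content has already been established in Theorem \ref{t1}. The proof will be two or three sentences: note $\depth(M)=d\ge n$, note that isolated singularity (or the explicit hypothesis) gives local freeness on $U_R$ by the Auslander--Buchsbaum argument above, and then quote Theorem \ref{t1}.

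\begin{proof}
Since $M$ is maximal Cohen-Macaulay, $\depth(M)=d$, so $1\le n\le\depth(M)$. If $R$ has an isolated singularity, then for each $p\in X^{d-1}(R)$ we have $\height(p)=\depth(R_{p})\le d-1$, hence $p\neq\mathfrak m$ and $R_{p}$ is regular; as $M_{p}$ is then a maximal Cohen-Macaulay module over the regular local ring $R_{p}$, it is free, so $M$ is locally free on $U_{R}$. Thus the hypotheses of Theorem \ref{t1} are satisfied, and the conclusion follows at once.
\end{proof}
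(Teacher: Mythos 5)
Your proposal is correct and is exactly the intended derivation: the corollary is the specialization of Theorem \ref{t1} to $\depth(M)=d$, which the paper itself leaves as an immediate consequence without a separate proof. Your extra verification that an isolated singularity forces local freeness on $U_R$ (via $\depth(R_p)=\height(p)\le d-1\Rightarrow p\ne\mathfrak m$, plus Auslander--Buchsbaum over the regular local ring $R_p$) is accurate and appropriately handles the parenthetical remark.
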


We will use Corollary \ref{cor1} and investigate the depth of $M\otimes_{R}M^{\ast}$. First we briefly review some of the related results in the literature. 

Auslander \cite[3.3]{Au} proved that, under mild conditions, good depth properties of $M$ and $M\otimes_{R}M^{\ast}$ force $M$ to be free. In particular the depth formula rarely holds for the pair $(M, M^{\ast})$. More precisely the following result can be deduced from the proof of \cite[3.3]{Au}:

\begin{thm}\label{Aus} (Auslander \cite{Au}, cf. also \cite[5.2]{HW1}) Let $R$ be a local Cohen-Macaulay ring and let $M$ be a finitely generated torsion-free $R$-module. Assume $M_{p}$ is a free $R_{p}$-module for each $p\in X^{1}(R)$. If $M\otimes_{R}M^{\ast}$ is reflexive, then $M$ is free.
\end{thm}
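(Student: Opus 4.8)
The plan is to analyze the natural \emph{evaluation} homomorphism
$$\nu\colon M\otimes_{R}M^{\ast}\longrightarrow \Hom_{R}(M,M),\qquad m\otimes f\longmapsto \bigl(x\mapsto f(x)m\bigr),$$
and to show that the reflexivity of $M\otimes_{R}M^{\ast}$ forces $\nu$ to be an isomorphism. Once that is established, $\id_{M}$ lies in the image of $\nu$, which yields a dual basis for $M$; hence $M$ is a direct summand of a finite free module, and, $R$ being local, $M$ is free. To start, I would observe that $\nu_{p}$ is the standard isomorphism $M_{p}\otimes_{R_{p}}(M_{p})^{\ast}\xrightarrow{\,\sim\,}\Hom_{R_{p}}(M_{p},M_{p})$ at every prime $p$ with $M_{p}$ free. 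Since $M_{p}$ is free for all $p\in X^{1}(R)$ and $R$ is Cohen--Macaulay, this holds for every prime of height at most $1$, so $\ker\nu$ and $C:=\coker\nu$ are supported only in height $\ge 2$. When $\dim R\le 1$ we have $X^{1}(R)=\Spec R$, so $M$ is already free; thus I may assume $\dim R\ge 2$, and then $\operatorname{grade}(\ann C,R)\ge 2$.

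The next step is to dispose of $\ker\nu$. As $M\otimes_{R}M^{\ast}$ is reflexive it is torsion-free, so all its associated primes are minimal; since $\ker\nu$ is a submodule of $M\otimes_{R}M^{\ast}$ supported only in height $\ge 2$, it has no associated prime and hence vanishes. This leaves a short exact sequence
$$0\longrightarrow M\otimes_{R}M^{\ast}\xrightarrow{\ \nu\ }\Hom_{R}(M,M)\longrightarrow C\longrightarrow 0.$$
Because $\operatorname{grade}(\ann C,R)\ge 2$ we have $\Hom_{R}(C,R)=\Ext^{1}_{R}(C,R)=0$, so applying $\Hom_{R}(-,R)$ to this sequence gives an isomorphism $\nu^{\ast}\colon\Hom_{R}(M,M)^{\ast}\xrightarrow{\,\sim\,}(M\otimes_{R}M^{\ast})^{\ast}$; applying $\Hom_{R}(-,R)$ once more gives an isomorphism $\nu^{\ast\ast}$.

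Now I would invoke naturality of the biduality map $\varepsilon$. In the commutative square whose horizontal arrows are $\nu$ and $\nu^{\ast\ast}$ and whose vertical arrows are the biduality maps of $M\otimes_{R}M^{\ast}$ and of $\Hom_{R}(M,M)$, the left-hand vertical map is an isomorphism by reflexivity and $\nu^{\ast\ast}$ is an isomorphism, so the composite $\varepsilon_{\Hom_{R}(M,M)}\circ\nu$ is an isomorphism. Hence $\nu$ is a split monomorphism, so the displayed sequence splits and $C$ becomes a direct summand of $\Hom_{R}(M,M)$. But $\Hom_{R}(M,M)$ is torsion-free --- it embeds in $M^{b}$ for any free presentation $R^{a}\to R^{b}\to M\to 0$ --- so it has no nonzero submodule supported in height $\ge 2$; therefore $C=0$, $\nu$ is an isomorphism, and $M$ is free as explained above.

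The step I expect to require the most care is the double-dualization: one must verify $\Hom_{R}(C,R)=\Ext^{1}_{R}(C,R)=0$ (this is where the Cohen--Macaulay hypothesis and the reduction to $\dim R\ge 2$ enter crucially), and, a little more delicately, that the square forcing $\nu$ to split genuinely commutes --- that is, that composing $\nu$ with biduality coincides with $\nu^{\ast\ast}$ precomposed with the biduality of $M\otimes_{R}M^{\ast}$ --- which is a routine but somewhat fussy naturality verification. The rest is formal; note that the hypothesis that $M$ is free on $X^{1}(R)$ is used solely to confine $\ker\nu$ and $\coker\nu$ to codimension $\ge 2$.
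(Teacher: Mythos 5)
Your argument is correct. One thing to flag about the comparison: the paper does not actually prove Theorem \ref{Aus}; it records it as a known result extracted from the proof of \cite[3.3]{Au} (cf.\ also \cite[5.2]{HW1}), so there is no in-paper proof to measure against, and your write-up amounts to a self-contained reconstruction of the classical evaluation-map argument. Every step checks out: since $R$ is Cohen--Macaulay, $X^{1}(R)$ is precisely the set of primes of height at most one, so $\ker\nu$ and $\coker\nu$ are supported in codimension at least two; reflexivity of $M\otimes_{R}M^{\ast}$ gives torsion-freeness, whose associated primes are minimal, and this kills $\ker\nu$; grade equals height over a Cohen--Macaulay ring, so $\operatorname{grade}(\ann C)\geq 2$ and Rees's characterization of grade yields $\Hom_{R}(C,R)=\Ext^{1}_{R}(C,R)=0$; the biduality square commutes by naturality, so $\varepsilon_{\Hom_{R}(M,M)}\circ\nu$ is an isomorphism and $\nu$ is a split monomorphism; and a summand of the torsion-free module $\Hom_{R}(M,M)$ supported in codimension at least two must vanish, so $\nu$ is onto and the dual-basis argument finishes the proof. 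The splitting step is a genuine refinement worth noting: the lemma one would usually quote here (a map that is bijective in codimension one from a reflexive module to a module satisfying $(S_{2})$ is bijective) is not directly applicable, because when $M$ is only torsion-free one only knows $\Hom_{R}(M,M)$ is torsion-free, not $(S_{2})$; your biduality/splitting trick needs only torsion-freeness of the target, so nothing is missing.
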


The conclusion of Auslander's result fails if $M\tensor_RM^*$ is a torsion-free module that is \emph{not} reflexive:

\begin{eg} \label{eg1}Let $R=k[[X,Y,Z]]/(XY-Z^2)$ and let $I$ be the ideal of $R$ generated by $X$ and $Y$.
Then it is clear that $R$ is a two-dimensional normal hypersurface domain and $I$ is locally free on $U_R$. Consider the short exact sequence:
$$(\ref{eg1}.1) \;\; \;0 \to I \to R \to R/I \to 0$$
Since $\dim(R/I)=0$, it follows from (\ref{eg1}.1) and the depth lemma that $\depth(I)=1$. Furthermore, as $R/I$ is torsion and $\Ext^{1}_{R}(R/I,R)=0$ \cite[3.3.10]{BH}, applying $\Hom(-,R)$ to (\ref{eg1}.1), we conlude that $I^{\ast}=\Hom(I,R) \cong R$. Therefore $I\otimes_{R}I^{\ast}\cong I$ is a torsion-free module that is not reflexive.
\end{eg}

Example \ref{eg1} raises the question of what can be deduced if one merely assumes $M\tensor_RM^*$ is torsion-free in Theorem \ref{Aus}. Auslander studied this question and proved the following result:

\begin{thm}\label{Aus2} (\cite[3.7]{Au}) Let $R$ be an \emph{even} dimensional regular local ring and let $M$ be a finitely generated $R$-module. Assume $M$ is locally free on $U_R$. Assume further that $\depth(M) =\depth(M^{\ast})$. If $\depth(M\otimes_{R}M^{\ast})>0$, then $M$ is free.  
\end{thm}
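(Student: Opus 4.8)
The plan is to deduce Theorem~\ref{Aus2} from Corollary~\ref{cor1}, exploiting the self-duality hypothesis together with the parity of $d$. Since $R$ is regular, it is Gorenstein, so we may take $\omega_R=R$ and hence $N^{\vee}=\Hom_R(N,R)=N^{\ast}$ throughout. First I would reduce to the maximal Cohen--Macaulay case: if $M$ is not free, let $g=\depth(M)=\depth(M^{\ast})$ and replace $M$ by a high enough syzygy $M'=\syz^R_{d-g}(M)$ so that $M'$ is maximal Cohen--Macaulay. One must check that the hypotheses are inherited: $M'$ is still locally free on $U_R$ (syzygies of locally free modules are locally free off the singular locus), $M'$ is nonfree (syzygies of nonfree modules over a regular local ring are nonfree, since $\pd_R M<\infty$), and the key point is that $\depth(M'\tensor_R M'^{\ast})>0$ still holds --- this should follow by tracking the short exact sequences defining the syzygies and the dual sequences, using that over a regular ring $\Tor^R_{\geq 1}$ and $\Ext^{\geq 1}_R$ against $M$ eventually vanish. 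Alternatively, and perhaps more cleanly, I would directly invoke Corollary~\ref{cor1} (or Theorem~\ref{t1}) for the non-MCM $M$ with $n=1$, since its hypotheses only require $1\leq n\leq\depth(M)$, $M$ locally free on $U_R$, and $N=M^{\ast}$ maximal Cohen--Macaulay --- but $M^{\ast}$ need not be MCM unless $\depth(M^{\ast})=d$, so the reduction to MCM modules via syzygies is the safer route and I would carry that out first.

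Once both $M$ and $M^{\ast}$ are maximal Cohen--Macaulay and $M$ is locally free on $U_R$, apply Corollary~\ref{cor1} with $N=M^{\ast}$ and $n=1$: the hypothesis $\depth(M\tensor_R M^{\ast})>0$ gives $\Ext^d_R(M,(M^{\ast})^{\vee})=\Ext^d_R(M,M^{\ast\ast})=\Ext^d_R(M,M)=0$, using reflexivity of the MCM module $M^{\ast}$ over the Gorenstein (indeed regular) ring $R$. So the torsion-freeness of $M\tensor M^{\ast}$ is equivalent to $\Ext^d_R(M,M)=0$. Now the parity hypothesis enters. Over a regular local ring of dimension $d$, the module $M$ has a finite free resolution, and one knows (this is precisely the content of Auslander's original even-dimensional argument) that for $d$ even, $\Ext^d_R(M,M)=0$ forces $\pd_R M<d$, hence by Auslander--Buchsbaum and the MCM hypothesis $\pd_R M=0$, i.e. $M$ is free. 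The mechanism is the pairing: $\Ext^d_R(M,M)$ receives a contribution forcing non-vanishing when $M$ is not free and $d$ is the precise length of the resolution; more carefully, if $\pd_R M=p$ then $\Ext^p_R(M,M)\neq 0$ (since $\Ext^p_R(M,k)\neq 0$ and a minimal resolution argument shows $\Ext^p_R(M,M)$ surjects onto something nonzero), and for $M$ MCM over regular $R$ one has $p\leq d$, with the even-dimensional self-$\Ext$ vanishing pinning down $p<d$ unless $M$ is free.

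**The hard part will be** the last step: converting $\Ext^d_R(M,M)=0$ together with the parity of $d$ into the freeness of $M$. The self-duality $\depth(M)=\depth(M^{\ast})$ is what makes the MCM reduction symmetric, but the real leverage is a rank/parity argument on the finite free resolution, essentially reproving the relevant half of Auslander~\cite[3.7]{Au}. I expect to argue as follows: let $F_\bullet\colon 0\to F_p\to\cdots\to F_0\to M\to 0$ be the minimal free resolution, $p=\pd_R M$. If $p=0$ we are done. If $p>0$, I would show $\Ext^p_R(M,M)\neq 0$: since the resolution is minimal, $\Ext^p_R(M,M)=\coker(\Hom_R(F_{p-1},M)\to\Hom_R(F_p,M))$ and minimality forces this cokernel to be $M/\mathfrak m M\otimes$(rank data)$\neq 0$ --- more precisely $\Ext^p_R(M,M)\cong M\otimes_R\Ext^p_R(M,R)$ up to the relevant identifications, and $\Ext^p_R(M,R)\neq 0$ with $\mathfrak m$ in its annihilator exponent, giving a nonzero $k$-vector space quotient. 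Hence $\Ext^d_R(M,M)=0$ forces $p\neq d$, so $p<d$; but $M$ MCM means $\depth M=d$, so Auslander--Buchsbaum gives $p=\edim R-\edim R\ + \dots$, precisely $p=\dim R-\depth M=0$, contradiction unless $M$ is free. The parity of $d$ is used to rule out the remaining cases $0<p<d$: here one invokes that $\Ext^i_R(M,M)$ for a module of finite projective dimension over a regular local ring, combined with the duality $\Ext^i_R(M,R)\cong\Ext^{d-i}_R(\syz^d M,R)$-type symmetries, forces a nonzero term exactly in degree $d$ when $d$ is even and $M$ is not free --- this is the subtle classical input I would cite from \cite{Au} rather than reprove in full. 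I would organize the write-up so that the MCM reduction and the application of Corollary~\ref{cor1} are done explicitly, and the parity step is stated as following Auslander's argument verbatim, since its subtlety is exactly why the hypothesis ``even dimensional'' cannot be dropped.
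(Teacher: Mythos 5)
Your proposal has two genuine gaps, and both are fatal. First, the reduction to the maximal Cohen--Macaulay case cannot work over a regular local ring: by Auslander--Buchsbaum, $\pd_R M=d-\depth(M)=d-g$, so the syzygy $M'=\syz^{d-g}_R(M)$ you pass to is \emph{already free}, whatever $M$ is. Your claim that ``syzygies of nonfree modules over a regular local ring are nonfree'' is exactly backwards (the top syzygy in a minimal resolution is free), so the reduction destroys the very non-freeness you are trying to detect; and since MCM modules over a regular local ring are free for trivial reasons, the entire content of the theorem is the case $\depth(M)<d$, which your reduction can never reach. (The transfer of $\depth(M'\otimes_R M'^{\ast})>0$ is also unjustified, but the point above already sinks the strategy.) Second, the endgame is not an argument: once $\depth(M)>0$ one has $\pd_R M<d$, so $\Ext^d_R(M,M)=0$ holds automatically and carries no information; there is no ``$\Ext$-parity'' mechanism of the kind you sketch, and Corollary \ref{cor1} is in any case only applicable when both modules are MCM, i.e.\ in the trivial case.

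The actual role of the parity of $d$ is entirely different, and the missing key ingredient is $\Tor$-rigidity. The proof (this is Auslander's, and it is exactly the pattern the paper uses for the hypersurface analogue, Proposition \ref{hyp}) runs: $M$ is torsion-free, so it embeds in a free module, $0\to M\to R^{(m)}\to M_1\to 0$; tensoring with $M^{\ast}$ and using $\depth(M\otimes_R M^{\ast})>0$ gives $\Tor_1^R(M_1,M^{\ast})=0$; rigidity of $\Tor$ over regular local rings (Auslander in the unramified case, Lichtenbaum in general --- which is why the unramifiedness hypothesis can be dropped) then gives $\Tor_i^R(M,M^{\ast})=0$ for all $i\geq 1$; the depth formula yields $\depth(M\otimes_R M^{\ast})=\depth(M)+\depth(M^{\ast})-d=2\depth(M)-d$, an \emph{even} integer, so positivity forces $\depth(M\otimes_R M^{\ast})\geq 2$, i.e.\ $M\otimes_R M^{\ast}$ is reflexive; now Theorem \ref{Aus} (\cite[3.3]{Au}) gives that $M$ is free. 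Your write-up never invokes rigidity, which is the one step that cannot be bypassed, so the proposal does not prove the theorem.
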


Auslander's original result assumes that the ring considered in Theorem \ref{Aus2} is unramified but this assumption can be removed by Lichtenbaum's $\Tor$-rigidity result \cite{Li}. Auslander also showed that such a result is no longer true for odd dimensional regular local rings; if $R$ is a regular local ring of odd dimension greater than one, then there exists a non-free finitely generated $R$-module $M$ such that $M$ is locally free on $U_R$, $M\cong M^{\ast}$ and $M\otimes_{R}M^{\ast}$ is torsion-free. This is fascinating since it indicates the parity of the dimension of $R$ may affect the homological properties of $R$-modules. 

Next we will prove that the conclusion of Theorem \ref{Aus2} carries over to certain types of hypersurfaces. Recall that if $R$ is a local Gorenstein ring and $M$ is a finitely generated $R$-module such that $M$ is locally free on $U_{R}$ and $\depth(M)>0$ (respectively $\depth(M)>1$), then $M$ is torsion-free (respectively reflexive). Note that the depth of the zero module is defined as $\infty$ (cf. \cite{HJW}).

\begin{prop} \label{hyp} Let $R$ be a even dimensional hypersurface with an isolated singularity such that $\widehat R\cong S/(f)$ for some unramified (or equicharacteristic) regular local ring $S$ and let $M$ be a finitely generated $R$-module. Assume $M$ is locally free on $U_R$. Assume further that $\depth(M) =\depth(M^{\ast})$. If $\depth(M\tensor_RM^{\ast})>0$, then $M$ is free.
\end{prop}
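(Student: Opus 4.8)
The plan is to reduce to Auslander's regular local ring result (Theorem \ref{Aus2}) by passing to a deformation, but the key new input is Corollary \ref{cor1}, which translates the depth hypothesis on $M\tensor_RM^{\ast}$ into the vanishing of $\Ext^d_R(M,M^{\ast\vee})$. First I would reduce to the complete case: completion preserves depth, the hypotheses, and freeness, so we may assume $R=S/(f)$ with $S$ unramified (or equicharacteristic) regular local. Then I would handle the trivial cases. If $M=0$ there is nothing to prove; if $d=\dim R=0$ then $R=S$ is a field (since $f$ is part of a regular system of parameters and $R$ is regular of dimension $0$), so every module is free. Hence assume $d\geq 2$ is even and $M\neq 0$. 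Since $\depth(M\tensor_RM^{\ast})>0$ and both $M$ and $M^{\ast}$ are locally free on $U_R$ (here $M^{\ast}$ inherits this because $R$ has an isolated singularity), $M$ and $M^{\ast}$ are torsion-free; in particular $M$ is a maximal Cohen-Macaulay $R$-module precisely when $\depth(M)=d$, and the hypothesis $\depth(M)=\depth(M^{\ast})$ together with a standard depth count (for reflexive modules over a hypersurface the canonical dual is MCM, or one invokes $\depth(M^{\ast})\geq\min\{2,d\}$) should let us conclude $M$ is MCM --- this is the first place I would need to be careful.

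Once $M$ is MCM, apply Corollary \ref{cor1} with $N=M^{\ast}$ and $n=1$: $\depth(M\tensor_RM^{\ast})\geq 1$ is equivalent to $\Ext^d_R(M,(M^{\ast})^{\vee})=0$. Over a Gorenstein ring $\omega_R\cong R$, so $(M^{\ast})^{\vee}=M^{\ast\ast}=M$ (reflexivity of the MCM module $M$ over the Gorenstein $R$), giving $\Ext^d_R(M,M)=0$. Now I would use the periodicity of resolutions over the hypersurface $R=S/(f)$: for an MCM module $M$ the minimal free resolution is eventually $2$-periodic, so $\Ext^i_R(M,M)\cong\Ext^{i+2}_R(M,M)$ for $i\gg 0$, and more precisely one has $\Ext^d_R(M,M)\cong\Ext^{d+2}_R(M,M)\cong\cdots$ once we are past the projective dimension obstructions --- since $d$ is even, $\Ext^d_R(M,M)=0$ forces $\Ext^{2j}_R(M,M)=0$ for all large $j$, hence (by periodicity backwards, which is genuine $2$-periodicity for MCM modules over a hypersurface) $\Ext^{2j}_R(M,M)=0$ for all $j\geq 1$.

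The endgame is to lift to $S$. Vanishing of even Ext over $R=S/(f)$, combined with the change-of-rings / Gulliksen-type relation between $\Ext_R^{\bullet}(M,M)$ and $\Ext_S^{\bullet}(M,M)$ (the latter being a finite module over a polynomial ring in one Bockstein-type variable), shows that $\Ext_S^i(M,M)=0$ for $i\gg 0$, i.e. $\pd_S M<\infty$. Since $S$ is regular, $M$ is actually a maximal Cohen-Macaulay $S$-module of finite projective dimension, hence $S$-free... no: $M$ is not MCM over $S$ in general. Rather, from $\pd_S M<\infty$ together with $\depth_S M=\depth_R M=d=\dim S-1$ and the Auslander-Buchsbaum formula, $\pd_S M=1$. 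Thus over $S$, $M$ has a length-one free resolution; applying $\Hom_S(-,S)$ and the hypothesis $\depth(M)=\depth(M^{\ast})$, I can now invoke Auslander's Theorem \ref{Aus2} directly (or re-run its argument): the pair $(M,M^{\ast})$ over the even-dimensional regular $S$ with $\depth(M\tensor_SM^{\ast})>0$ --- one must check $M\tensor_SM^{\ast}$ still has positive depth, which follows since $M\tensor_RM^{\ast}=M\tensor_SM^{\ast}$ as $f$ kills neither factor in a problematic way and Tor comparison is controlled --- forces $M$ to be $S$-free, hence $R$-free. The main obstacle is the careful bookkeeping in this last lifting step: ensuring the even-Ext vanishing over $R$ really yields $\pd_S M<\infty$ and that the depth hypotheses transfer cleanly to $S$ so that Auslander's parity argument applies verbatim.
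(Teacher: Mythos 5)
There is a genuine gap, and it occurs exactly where you flag it: the reduction to the case that $M$ is maximal Cohen--Macaulay. The hypotheses only give $\depth(M)=\depth(M^{\ast})\geq\min\{2,d\}$ (reflexivity of $M^{\ast}$ plus local freeness on $U_R$), which is far from $\depth(M)=d$ when $d\geq 4$; nothing in your sketch closes this, and it cannot be closed a priori, since the freeness of $M$ is the conclusion, not an input. Since Corollary \ref{cor1} requires \emph{both} modules to be maximal Cohen--Macaulay, your main tool is unavailable; in effect you are proving the MCM case, which is exactly Corollary \ref{c6} (valid for any even-dimensional Gorenstein ring with $\CI_R(M)<\infty$), whereas the whole point of Proposition \ref{hyp} is that $M$ is not assumed MCM. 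The paper's proof goes a different way: positive depth of $M\otimes_RM^{\ast}$ makes it torsion-free, so tensoring the embedding $0\to M\to R^{(m)}\to M_1\to 0$ with $M^{\ast}$ gives $\Tor_1^R(M_1,M^{\ast})=0$; then the Tor-rigidity theorem for hypersurfaces in unramified (or equicharacteristic) regular local rings \cite[4.1]{Da3} --- this is precisely where the unramified hypothesis enters, and your proposal never uses it --- kills $\Tor_i^R(M,M^{\ast})$ for all $i\geq 1$, so Theorem \ref{Choi} yields the depth formula, whence $\depth(M\otimes_RM^{\ast})=2(\depth(M)-d/2)$ is even and positive, hence $\geq 2$, and Theorem \ref{Aus} (which needs no MCM hypothesis) finishes.

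Your endgame is also broken even granting MCM. If $\dim R=d$ is even then $\dim S=d+1$ is odd, so Auslander's Theorem \ref{Aus2} does not apply over $S$; moreover $\Hom_S(M,S)=0$ because $f$ kills $M$ while $S$ is a domain, so the pair you want to feed into that theorem does not exist; and ``$\pd_SM<\infty$'' carries no information since $S$ is regular. If $M$ really were MCM, the correct finish from $\Ext_R^d(M,M)=0$ with $d$ even is the one in Corollary \ref{c6}: invoke \cite[4.2]{AvBu} (or, for hypersurfaces, the identification $\Ext_R^{2}(M,M)\cong\underline{\Hom}_R(M,M)$ coming from periodicity) to get $\pd_R(M)<\infty$ and then Auslander--Buchsbaum --- no descent to $S$ is needed or helpful.
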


\begin{proof} Notice $R$ is a domain since it is normal. If $M^{\ast}=0$, then $\depth(M)=\depth(0)=\infty$ so that $M=0$. Hence we may assume $M^{\ast} \neq 0 \neq M$. Therefore, since $\depth(M)=\depth(M^{\ast})\geq 1$ and $M$ is locally free on $U_{R}$, $M$ is torsion-free. This shows that $M$ can be embedded in a free module: 
$$(\ref{hyp}.1)\;\;\; 0 \rightarrow M \rightarrow R^{(m)} \rightarrow M_{1} \rightarrow 0 $$
Tensoring (\ref{hyp}.1) with $M^{\ast}$, we conclude that $\Tor_{1}^{R}(M_{1},M^{\ast})=0$. It follows from \cite[4.1]{Da3} that the pair $(M_{1},M^{\ast})$ is $\Tor$-rigid. Thus $\Tor_{i}^{R}(M_{1},M^{\ast})=0=\Tor_{i}^{R}(M,M^{\ast})$ for all $i\geq 1$. Now Theorem \ref{Choi} implies that $\displaystyle{\depth(M)+\depth(M^{\ast})=\dim(R)+\depth(M\otimes_{R}M^{\ast})}$. 
Write $\dim(R)=2 \cdot n$ for some integer $n$. Then $\depth(M\tensor_RM^{\ast})=2 \cdot (\depth(M)-n)$ is a positive integer by assumption. This implies $\depth(M\otimes_{R}M^{\ast})\geq 2$ so that the result follows from Theorem \ref{Aus}.
\end{proof}

We suspect that Proposition \ref{hyp} is true for all even dimensional {\it complete intersection} rings. 

\begin{conj}\label{conjAu}
Let $R$ be a even dimensional complete intersection with an isolated singularity and let $M$ be a finitely generated $R$-module. Assume $M$ is locally free on $U_R$.  Assume further that $\depth(M) =\depth(M^{\ast})$. If $\depth(M\tensor_RM^{\ast})>0$, then $M$ is free.
\end{conj}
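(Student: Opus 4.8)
\smallskip
\noindent\emph{A possible approach to Conjecture \ref{conjAu}.} The plan is to follow the proof of Proposition \ref{hyp}, isolating the single place where the hypersurface argument has to be strengthened. After harmless reductions we may assume $M$ and $M^{\ast}$ are nonzero and that $k$ is infinite. If $\dim R=0$ then $R$ is a field and $M$ is free, so assume $d=\dim R\ge 2$. Being a complete intersection of dimension $\ge 2$ with an isolated singularity, $R$ satisfies $(R_{1})$ and $(S_{2})$, hence is a normal domain. Since $M^{\ast}$ is a second syzygy over the Gorenstein ring $R$, it is reflexive and satisfies $(S_{2})$; therefore $\depth M=\depth M^{\ast}\ge 2$, and both $M$ and $M^{\ast}$ are reflexive and locally free on $U_{R}$.

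The crux is to prove
$$(\star)\qquad\Tor^{R}_{i}(M,M^{\ast})=0\ \text{ for all } i\ge 1.$$
Granting $(\star)$, Theorem \ref{Choi} (available since $\CI_{R}(M)<\infty$) yields the depth formula $\depth M+\depth M^{\ast}=d+\depth(M\tensor_{R}M^{\ast})$, so $\depth(M\tensor_{R}M^{\ast})=2\depth M-d$. This integer is even (because $d$ is) and positive (by hypothesis), hence at least $2$; as $M\tensor_{R}M^{\ast}$ is also locally free on $U_{R}$, it then satisfies $(S_{2})$, so it is reflexive over the Gorenstein ring $R$, and Theorem \ref{Aus} forces $M$ to be free. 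Thus the conjecture reduces to $(\star)$.

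Toward $(\star)$, the hypothesis $\depth(M\tensor_{R}M^{\ast})>0$ already produces the first vanishing. As $M$ is torsion-free, the pushforward construction of Section \ref{Tor} gives a short exact sequence $\ses{M}{F}{M_{1}}$ with $F$ free and $M_{1}$ again locally free on $U_{R}$. Tensoring with $M^{\ast}$ identifies $\Tor^{R}_{1}(M_{1},M^{\ast})$ with $\ker\bigl(M\tensor_{R}M^{\ast}\to F\tensor_{R}M^{\ast}\bigr)$; this module has finite length (both $M_{1}$ and $M^{\ast}$ are locally free on $U_{R}$), hence is torsion, yet it embeds in the torsion-free module $M\tensor_{R}M^{\ast}$, and so vanishes. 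Hence $\Tor^{R}_{1}(M_{1},M^{\ast})=0$, and via the long exact sequence $(\star)$ is equivalent to $\Tor^{R}_{i}(M_{1},M^{\ast})=0$ for all $i\ge 1$.

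The heart of the matter, and the step I expect to be the genuine obstacle, is propagating this single vanishing to all higher $\Tor$, i.e. showing that the pair $(M_{1},M^{\ast})$ is $\Tor$-rigid under the standing hypotheses. When $R$ is a hypersurface this is exactly \cite[4.1]{Da3}, which is what makes Proposition \ref{hyp} run; over a complete intersection of codimension $\ge 2$, $\Tor$-rigidity fails in general, so a genuinely new input is needed (reducing instead to the already-established maximal Cohen-Macaulay case, Corollary \ref{t2}, is not obviously possible: the pushforward lowers depth, while passing to a high syzygy destroys the symmetry between $M$ and $M^{\ast}$). Two partial inroads seem natural. First, when $\cod(R)\le d-1$ the isolated singularity of $R$ supplies hypothesis (2) of Theorem \ref{main1}, so that result becomes available once one also knows the depth formula for $(M,M^{\ast})$ together with enough initial $\Tor$-vanishing; the remaining case $\cod(R)\ge d$ — which occurs, for instance, for cones over smooth complete-intersection curves in $\PP^{3}$ — still needs a separate argument. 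Second, one can try to descend along a quasi-deformation witnessing $\CI_{R}(M)<\infty$, ideally to a hypersurface where \cite{Da3} applies, and transfer the vanishing using the Avramov-Buchweitz cohomology operators and support varieties; bounding $\cx_{R}(M)$ and controlling $\Tor$ in the middle range of the resolution via the support variety of $(M_{1},M^{\ast})$ is, as far as I can see, precisely where the difficulty concentrates and why the conjecture remains open.
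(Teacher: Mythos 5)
The statement you were asked about is Conjecture \ref{conjAu}: the paper does \emph{not} prove it, and explicitly leaves it open, proving only the hypersurface case (Proposition \ref{hyp}) and the maximal Cohen--Macaulay case (Corollary \ref{c6}, restated as Corollary \ref{t2}). So there is no paper proof to compare yours against, and your proposal, by your own admission, is not a proof either: everything hinges on the unproved vanishing $(\star)$, equivalently on $\Tor$-rigidity of the pair $(M_{1},M^{\ast})$. Your reductions up to that point are sound and faithfully reproduce the strategy of Proposition \ref{hyp}: normality of $R$, reflexivity and $\depth\ge 2$ for $M$ and $M^{\ast}$, the identification of $\Tor_{1}^{R}(M_{1},M^{\ast})$ with a finite-length submodule of the torsion-free module $M\tensor_{R}M^{\ast}$ (hence its vanishing), and, granting $(\star)$, the passage through Theorem \ref{Choi} to the depth formula, the parity argument giving $\depth(M\tensor_{R}M^{\ast})\ge 2$, and the conclusion via Theorem \ref{Aus}. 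This is exactly the skeleton of the paper's hypersurface proof, where the rigidity input is \cite[4.1]{Da3}.

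The genuine gap is precisely the one you name: over complete intersections of codimension at least two there is no analogue of \cite[4.1]{Da3}, and $\Tor$-rigidity fails in general, so the single vanishing $\Tor_{1}^{R}(M_{1},M^{\ast})=0$ cannot be propagated by any known argument under the conjecture's hypotheses. Note also that your first ``partial inroad'' via Theorem \ref{main1} is circular as stated: that theorem converts the depth formula plus an initial range of $\Tor$-vanishing into full vanishing, but the conjecture only supplies $\depth(M\tensor_{R}M^{\ast})>0$, not the depth formula, which in your scheme is itself a consequence of $(\star)$ and Theorem \ref{Choi}. So your text should be read as an accurate analysis of why the conjecture is open (and of how the known cases, Proposition \ref{hyp} and Corollary \ref{t2}, are proved), not as a proof; to turn it into one you would need a new rigidity or support-variety argument replacing \cite{Da3}, which is exactly what the paper does not have.
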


As an additional supporting evidence, we prove a special case of Conjecture \ref{conjAu}, namely the case where $M$ is maximal Cohen-Macaulay:
 
\begin{cor} \label{c6} Let $R$ be a even dimensional local Gorenstein ring and let $M$ be a maximal Cohen-Macaulay $R$-module that is locally free on $U_R$. Assume $\CI_{R}(M)<\infty$ and $\depth(M\otimes_{R}M^{\ast})>0$. Then $M$ is free. 
\end{cor}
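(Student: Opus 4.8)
The plan is to leverage Corollary \ref{cor1} together with the duality properties of $\Ext$ over a Gorenstein ring, and then reduce to Theorem \ref{Aus}. Since $R$ is Gorenstein, $\omega_R \cong R$, so $N^\vee = N^* $ for any maximal Cohen-Macaulay module $N$; in particular the canonical-module hypothesis of Corollary \ref{cor1} is automatic and $M^\vee = M^*$. First I would set $d = \dim R = 2n$ and apply Corollary \ref{cor1} with $N = M$ (which is maximal Cohen-Macaulay and locally free on $U_R$ by hypothesis): the assumption $\depth(M \otimes_R M^*) > 0$ means $\depth(M\otimes_R M^*)\geq 1$, so taking the integer in that corollary to be $1$ gives $\Ext^d_R(M, M^{*\vee}) = 0$. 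Here one must be a little careful about what $M^{*\vee}$ is: since $M$ is maximal Cohen-Macaulay over a Gorenstein ring, $M^*$ is maximal Cohen-Macaulay and $M^{**}\cong M$, so $\Ext^d_R(M,M^{*\vee})$ is really $\Ext^d_R(M, (M^*)^*) \cong \Ext^d_R(M,M)$. Thus the hypothesis forces $\Ext^d_R(M,M) = 0$, i.e. $\Ext^{2n}_R(M,M)=0$.

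The next step is to propagate this single vanishing into a longer string of vanishing $\Ext$'s, using finite complete intersection dimension. Because $\CI_R(M) < \infty$ and $M$ is maximal Cohen-Macaulay over a Gorenstein ring, $M$ is a module over a (virtual) complete intersection and its cohomology over $R$ is eventually periodic of period $2$ after passing through a quasi-deformation, or more usefully: the Ext modules $\Ext^i_R(M,M)$ for $i\gg 0$ vanish together in consecutive pairs. More precisely, I would invoke the rigidity of cohomology for modules of finite CI-dimension — the Gorenstein-analogue of \cite[4.2]{ArY} on the $\Ext$ side, or Avramov--Buchweitz-type results — to conclude that $\Ext^{2n}_R(M,M)=0$ already forces $\Ext^i_R(M,M)=0$ for all $i$ in a full window of length equal to (a bound on) $\cx_R(M)$, and then that the vanishing persists for all $i \geq 1$. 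Equivalently, one can combine Corollary \ref{cor1} applied with larger $n$-values together with the observation that $\depth(M\otimes_R M^*)$, once positive, is forced to be large: tensoring the pushforward sequence of $M$ with $M^*$ and using $\Tor$-rigidity (as in the proof of Proposition \ref{hyp}, via \cite[4.1]{Da3} or its CI-analogue) shows $\Tor^R_i(M,M^*)=0$ for all $i\geq 1$, whence Theorem \ref{Choi} gives the depth formula $2\depth(M) = d + \depth(M\otimes_R M^*)$, forcing $\depth(M\otimes_R M^*)$ to be \emph{even} and hence $\geq 2$.

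With $\depth(M\otimes_R M^*)\geq 2$ in hand, $M\otimes_R M^*$ is a second syzygy over the Gorenstein ring $R$, hence reflexive. But $M$ is maximal Cohen-Macaulay and locally free on $U_R$ with $\depth(M) = \depth(M^*) = d$, so Theorem \ref{Aus} applies directly and yields that $M$ is free. This is essentially the maximal Cohen-Macaulay instance of the argument in Proposition \ref{hyp}, with the hypersurface input \cite[4.1]{Da3} replaced by the finite-CI-dimension input that provides $\Tor$-rigidity of the pair $(M_1, M^*)$ where $M_1$ is a pushforward of $M$.

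The main obstacle I anticipate is the middle step: converting the single piece of information $\Ext^{2n}_R(M,M)=0$ (or equivalently $\depth(M\otimes_RM^*)\geq 1$) into the full vanishing of all $\Tor^R_i(M,M^*)$, since over a general complete intersection the cohomology of $M$ need not be rigid for arbitrary test modules. The cleanest route is probably to show $\depth(M\otimes_RM^*)$ is automatically even by forcing the depth formula first (which only needs $\Tor$-vanishing in a bounded range, supplied by the pushforward plus the torsion-freeness bootstrap), and then to use $\depth(M\otimes_RM^*)\geq 2$ with Theorem \ref{Aus}; this sidesteps needing rigidity in a long range and keeps the argument parallel to Proposition \ref{hyp}.
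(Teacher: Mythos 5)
Your first step is exactly the paper's: applying Corollary \ref{cor1} with $N=M^{\ast}$ (and $N^{\vee}=M^{\ast\ast}\cong M$ since $R$ is Gorenstein and $M$ is maximal Cohen-Macaulay) turns $\depth(M\otimes_RM^{\ast})\geq 1$ into the single vanishing $\Ext^d_R(M,M)=0$. But from there the paper finishes in one stroke: since $d$ is even and $\CI_R(M)<\infty$, the vanishing of a \emph{single even-degree self-extension} module already forces $\pd_R(M)<\infty$ by Avramov--Buchweitz \cite[4.2]{AvBu}, and then Auslander--Buchsbaum gives freeness. This is a rigidity statement special to the diagonal case $N=M$ (proved via support varieties), and it is precisely the ingredient your proposal never pins down. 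You gesture at ``Avramov--Buchweitz-type results,'' but your two concrete routes both have genuine gaps.

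Route (a), Tor-rigidity of the pair $(M_1,M^{\ast})$ ``via \cite[4.1]{Da3} or its CI-analogue,'' does not work: Dao's result is a hypersurface theorem (theta pairing), and no CI-analogue is available --- indeed the absence of such rigidity over general complete intersections is exactly why the paper states Conjecture \ref{conjAu} instead of proving it by the argument of Proposition \ref{hyp}, and why Corollary \ref{c6} is proved by a different mechanism. Route (b), your ``cleanest route,'' is circular as stated: to force the depth formula you invoke Theorem \ref{Choi}, which needs $\Tor_i^R(M,M^{\ast})=0$ for \emph{all} $i\geq 1$, not a bounded range; and the converse results of Section \ref{Tor} (Theorem \ref{p1}, Theorem \ref{main1}) take the depth formula as a hypothesis, so they cannot supply it. The ``torsion-freeness bootstrap'' from $\depth(M\otimes_RM^{\ast})\geq 1$ yields only $\Tor_1^R(M_1,M^{\ast})=0$ for one pushforward $M_1$ (the finite-length Tor embeds in a torsion-free module); to continue you would need either more depth of $M\otimes_RM^{\ast}$ (which is what you are trying to prove) or rigidity (which you do not have). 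So the endgame via evenness of $\depth(M\otimes_RM^{\ast})$, reflexivity, and Theorem \ref{Aus} is fine in principle but rests on an unproven middle step; replacing it with the citation of \cite[4.2]{AvBu} closes the gap and recovers the paper's proof.
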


\begin{proof} It follows from Corollary \ref{cor1} that $\Ext_{R}^{d}(M,M)=0$ where $d=\dim(R)$. Since $d$ is even, \cite[4.2]{AvBu} implies that $\pd_{R}(M)<\infty$. Hence $M$ is free by the Auslander-Buchsbaum formula.
\end{proof}

Recall that modules over complete intersections have finite complete intersection dimension. Therefore we immediately obtain Corollary \ref{t2} as advertised in the introduction:

\begin{cor}
Let $R$ be an even dimensional complete intersection that has an isolated singularity and let $M$ be a maximal Cohen-Macaulay $R$-module. If $\depth(M\tensor M^*)>0$, then $M$ is free.
\end{cor}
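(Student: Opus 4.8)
The plan is to deduce this statement directly from Corollary \ref{c6}; the only work is to verify that its three hypotheses hold in the present setting. First, a complete intersection ring is in particular Gorenstein, so $R$ is an even dimensional local Gorenstein ring. Second, as recalled in Section \ref{Tor}, every finitely generated module over a complete intersection has finite complete intersection dimension, so $\CI_R(M)<\infty$. Third, $M$ is maximal Cohen-Macaulay by hypothesis, and the remaining point is to check that it is locally free on $U_R$.

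For that last point, recall that $U_R$ refers to the primes in $X^{d-1}(R)$ with $d=\dim R$, which—since $R$ is Cohen-Macaulay—is precisely the set of non-maximal primes of $R$. Fix such a prime $p$; we may assume $p\in\Supp M$, since otherwise $M_p=0$ is trivially free. Because $R$ has an isolated singularity, $R_p$ is regular, hence $M_p$ has finite projective dimension over $R_p$. The depth inequality $\depth_{R_p}(M_p)\geq\depth_R(M)-\dim(R/p)$, together with $\depth_R(M)=\dim R$ and $\dim R-\dim(R/p)=\height p=\dim R_p$ (the latter using that $R$ is Cohen-Macaulay and $p$ is non-maximal), forces $\depth_{R_p}(M_p)=\dim R_p$; the Auslander--Buchsbaum formula then shows $M_p$ is free. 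Thus $M$ is locally free on $U_R$. This is of course the well-known fact, already invoked in Corollary \ref{cor1}, that a maximal Cohen-Macaulay module over a ring with isolated singularity is locally free on the punctured spectrum.

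With $R$ even dimensional Gorenstein, $M$ maximal Cohen-Macaulay and locally free on $U_R$, $\CI_R(M)<\infty$, and the standing assumption $\depth(M\tensor_R M^{\ast})>0$, Corollary \ref{c6} yields that $M$ is free. There is essentially no obstacle: the mathematical content lives in Corollary \ref{c6} (hence ultimately in Theorem \ref{t1} and the parity input of \cite[4.2]{AvBu}), and the present corollary is merely its specialization to complete intersection rings, where finiteness of complete intersection dimension is automatic and can therefore be dropped from the hypotheses.
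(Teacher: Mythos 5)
Your proposal is correct and follows the same route as the paper: the paper obtains this corollary immediately from Corollary \ref{c6}, noting that modules over complete intersections have finite complete intersection dimension, with the isolated-singularity hypothesis supplying local freeness on $U_R$. Your verification of that local freeness via Auslander--Buchsbaum is just a spelled-out version of the standard fact the paper takes for granted.
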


We now present examples showing that the conclusion of Corollary \ref{c6} fails for odd dimensional local rings; more precisely we show that, for each positive odd integer $n$, there exists a $n$-dimensional hypersurface $R$ and a \emph{nonfree} maximal Cohen-Macaulay $R$-module $M$ such that $R$ has an isolated singularity and $\depth(M\otimes_{R}M^{\ast})>0$. We will use the next result which  is known as \emph{Kn\"{o}rrer's periodicity theorem} (cf. \cite[12.10]{Yo}):

\begin{thm} (Kn\"{o}rrer)\label{kr}  Let $R=k[[X_{1}, X_{2}, \dots, X_{n}]]/(f)$ and let $R^{\sharp\sharp}=R[[U,V]]/(f+UV)$ where $k$ is an algebraically closed field of characteristic zero. Suppose  
$\underline{\MCM}(R)$ denotes the stable category of maximal Cohen-Macaulay $R$-modules. Then there is an equivalence of categories:
$$\Omega: \underline{\MCM}(R) \to \underline{\MCM}(R^{\sharp\sharp})$$
Here the objects of $\underline{\MCM}(R)$ are maximal Cohen-Macaulay $R$-modules and the $\Hom$ sets are defined as $\displaystyle{ \underline{\Hom}_{R}(M,N)=\frac{\Hom_{R}(M,N)}{S(M,N)} }$ where $S(M,N)$ is the $R$-submodule of $\Hom_{R}(M,N)$ that consists of $R$-homomorphisms factoring through free $R$-modules.
\end{thm}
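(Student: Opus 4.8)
The plan is to pass to matrix factorizations and produce the equivalence there by an explicit block construction. Recall Eisenbud's theorem (see \cite{Yo}): for a regular local ring $(T,\mathfrak n)$ and a nonzerodivisor $g\in\mathfrak n^{2}$, the stable category $\underline{\MCM}(T/(g))$ is equivalent to the homotopy category of matrix factorizations of $g$ over $T$ --- pairs $(\phi,\psi)$ of square matrices over $T$ with $\phi\psi=\psi\phi=g\cdot\id$ --- the equivalence sending $(\phi,\psi)$ to $\coker(\phi)$. Applying this with $T=S=k[[X_{1},\dots,X_{n}]]$, $g=f$, and with $T=S''=S[[U,V]]$, $g=f+UV$, it suffices to construct an equivalence between the homotopy categories of matrix factorizations of $f$ over $S$ and of $f+UV$ over $S''$.

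First I would write down the functor on matrix factorizations. Given a matrix factorization $(\phi,\psi)$ of $f$ over $S$, set
$$\Omega(\phi,\psi)=\left(\begin{pmatrix}\phi & U\cdot\id\\ V\cdot\id & -\psi\end{pmatrix},\ \begin{pmatrix}\psi & U\cdot\id\\ V\cdot\id & -\phi\end{pmatrix}\right).$$
Since $U$ and $V$ are central, a one-line computation gives that the product of the two displayed matrices in either order equals $(f+UV)\cdot\id$, so $\Omega(\phi,\psi)$ is indeed a matrix factorization of $f+UV$ over $S''$. The assignment is visibly functorial on morphisms of matrix factorizations and carries null-homotopic maps to null-homotopic maps, hence descends to a functor $\Omega\colon\underline{\MCM}(R)\to\underline{\MCM}(R^{\sharp\sharp})$ on stable categories, which will be the functor of the theorem.

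It then remains to show $\Omega$ is fully faithful and essentially surjective. For essential surjectivity one starts with an arbitrary matrix factorization $(\Phi,\Psi)$ of $f+UV$ over $S''$ and must show it is homotopy equivalent to one in the block form above. The mechanism is to exploit multiplication by $U$ and by $V$ as operators on the associated MCM $R^{\sharp\sharp}$-module and "complete the square" --- here $2\in k^{\times}$ enters, e.g.\ via the substitution $U=s+t$, $V=s-t$ turning $UV$ into $s^{2}-t^{2}$; one way to organize the resulting reduction is to observe that the relevant structure is that of a module over the Clifford algebra of the hyperbolic plane $UV$ over $R$, which is a $2\times2$ matrix algebra and hence Morita trivial, so that unwinding produces a matrix factorization over $S$ mapping to $(\Phi,\Psi)$ up to homotopy and simultaneously yields a candidate quasi-inverse. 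Full faithfulness then follows either from the quasi-inverse so obtained or from a parallel computation identifying the stable $\Hom$ groups before and after applying $\Omega$.

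The essential-surjectivity step is the main obstacle and the genuinely delicate part of the argument: reducing an arbitrary matrix factorization of $f+UV$ to the standard block shape is not formal, and it is precisely here that the characteristic-zero (really $2$-invertible) hypothesis is used, and, for the cleanest form of the reduction, that $k$ is algebraically closed. It is also the reason the periodicity has period $2$ rather than $1$: the one-variable analogue $R\rightsquigarrow R[[U]]/(f+U^{2})$ is only an equivalence onto a $\ZZ/2$-graded subcategory, and passing to two variables is what untwists this.
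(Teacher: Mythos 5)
The paper does not prove this statement at all: Theorem \ref{kr} is quoted as Kn\"{o}rrer's periodicity theorem with a citation to \cite[12.10]{Yo}, and only the statement (together with the facts $\underline{\Hom}_{R}(M,N)\cong\Ext_{R}^{2}(M,N)$ and $\Omega(\syz^{R}_{1}(M))\cong\syz^{R^{\sharp\sharp}}_{1}(\Omega(M))$) is used afterwards. So you are attempting to reprove Kn\"{o}rrer's theorem itself, and measured as a proof your proposal has a genuine gap. What you actually establish is the easy half: Eisenbud's equivalence between $\underline{\MCM}$ and the homotopy category of matrix factorizations, and the fact that your block matrices do form a matrix factorization of $f+UV$ (your product computation is correct, and this is indeed the standard Kn\"{o}rrer functor, well defined on the stable categories). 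But the content of the theorem is precisely that this functor is an equivalence, and your treatment of that point is a plan rather than an argument: ``complete the square,'' ``the relevant structure is a module over the Clifford algebra of the hyperbolic plane, which is Morita trivial, so unwinding produces a quasi-inverse'' --- none of this is carried out, and you say yourself that the reduction ``is not formal.'' Full faithfulness is likewise deferred to the quasi-inverse you never construct.

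In Kn\"{o}rrer's original argument and in Yoshino's exposition this is where essentially all the work lies: one passes through the double branched cover $S[[U]]/(f+U^{2})$, keeps track of a $\ZZ/2$-symmetry (equivalently, of $\ZZ/2$-graded Clifford-module structures), identifies the composites of the relevant functors up to stable isomorphism and syzygy, and uses the hypotheses on $k$ (characteristic not $2$, existence of a square root of $-1$) to split the Clifford algebra; several pages of nontrivial computation are needed to see that an arbitrary matrix factorization of $f+UV$ is homotopy equivalent to one in your block form. So either complete that reduction in detail, or do what the paper does and cite \cite[12.10]{Yo}; as written, the heart of the proof is missing. Your closing remark about period $2$ versus period $1$ is an inessential aside and does not affect this assessment.
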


As $\underline{\Hom}_{R}(M,N) \cong \Ext_{R}^{2}(M,N)$ and $\Omega(\syz^{R}_{1}(M)) \cong \syz^{R^{\sharp\sharp}}_{1}(\Omega(M))$ (cf. \cite[Chapter 12]{Yo}), it follows from Theorem \ref{kr} that $\Ext_{R}^{j}(M,N)=0$ if and only if $\Ext_{R^{\sharp\sharp}}^{j}(\Omega(M),\Omega(N))=0$ for $j\geq 0$. 

\begin{eg} \label{e3} We use induction and prove that if $n$ is a positive odd integer, then there exists a $n$-dimensional hypersurface $R$ and a non-free maximal Cohen-Macaulay $R$-module $M$ such that $R$ has an isolated singularity and $\depth(M\otimes_{R}M^{\ast})>0$, that is, $M\otimes_{R}M^{\ast}$ is torsion-free. Throughout the example, $k$ denotes an algebraically closed field of characteristic zero.

Assume $n=1$. Let $R=k[[X,Y]]/(f)$ for some element $f$ in $k[[X,Y]]$ such that $f$ is reducible and has no repeated factors. We pick $M=k[[x,y]]/(g)$ for some element $g\in k[[X,Y]]$ such that $g$ divides $f$. Then it can be easily checked that $R$ has an isolated singularity (that is $R$ is reduced), $M$ is torsion-free, $M^{\ast}\cong M$ and $M\otimes_{R}M^{\ast}\cong M$. It is also easy to see that $\Ext_{R}^{2i-1}(M,M)=0$ for all $i\geq 1$.  

Assume now $n=2d+1$ for some positive integer $d$. Suppose there exist a hypersurface $R=k[[X_{1}, X_{2}, \dots, X_{2d}]]/(f)$ that has an isolated singularity and a non-free maximal Cohen-Macaulay $R$-module $M$ such that $\Ext^{2i-1}_{R}(M,M)=0$ for all $i\geq 1$. Then, using the notations of Theorem \ref{kr}, we set $S=R^{\sharp\sharp}$ and $N=\Omega(M)$. Hence $S$ is a $n$-dimensional hypersurface that has an isolated singularity and $N$ is a non-free maximal Cohen-Macaulay $S$-module. It follows from the discussion following Theorem \ref{kr} that $\Ext^{2i-1}_{S}(N,N)=0$ for all $i\geq 1$. Thus Theorem \ref{t1} implies that $\depth_{S}(N\otimes_{S}N^{\ast})>0$ where $N^{\ast}=\Hom_{S}(N,S)$ (indeed, as $N$ is not free, $\depth_{S}(N\otimes_{S}N^{\ast})=1$ by Theorem \ref{Aus}.) 
\end{eg}

\begin{rmk} It is proved in Example \ref{e3} that there are many one-dimensional hypersurfaces $R$ and non-free finitely generated $R$-modules $M$ such that $M$ and $M\otimes_{R}M^{\ast}$ are torsion-free. However it is not known whether there exist such modules over one-dimensional complete intersection \emph{domains} of codimension at least two. This was first addressed by Huneke and Wiegand in \cite[page 473]{HW1} (cf. also \cite[4.1.6]{Ce} and \cite[3.1]{HW1}).
\end{rmk}

The Auslander-Reiten conjecture (for commutative local rings) states that if $R$ is a local ring and and $M$ is a finitely generated $R$-module such that $\Ext_R^i(M,M\oplus R)=0$ for all $i>0$, then $M$ is free \cite{AuRe}. Huneke and Leuschke \cite{HL} proved that this long-standing conjecture is true for Gorenstein rings that are complete intersections in codimension one (see also \cite[Theorem 3]{Ar} and \cite[5.9]{HJ}). As another application of Corollary \ref{cor1} we give a short proof of this result.

\begin{thm}(Huneke-Leuschke \cite{HJ})\label{Ar} Let $R$ be a local Gorenstein ring and let $M$ be a finitely generated $R$-module. Assume $R_{p}$ is a complete intersection for all $p\in X^{1}(R)$. If $\Ext^{i}_{R}(M,M\oplus R)=0$ for all $i>0$, then $M$ is free.
\end{thm}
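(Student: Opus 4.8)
The plan is to reduce the Auslander–Reiten conjecture for these rings to the case of an isolated singularity, and then apply Corollary \ref{cor1} to obtain the key vanishing $\Ext^d_R(M,M)=0$ in even dimension, while handling odd dimension via the standard device of passing to $R[[t]]$ or using a general hyperplane section. First I would observe that, by a standard localization and completion argument, one may assume $R$ is complete; then one can write $\widehat R$ and use the fact that the hypotheses $\Ext^i_R(M,M\oplus R)=0$ for all $i>0$ are inherited by $\widehat R$ and by localizations $R_p$. Since $R_p$ is a complete intersection for all $p\in X^1(R)$, we in particular know that $M_p$ is free for all such $p$ after we apply the inductive hypothesis (the conjecture being known in lower dimensions, or by a dimension-shifting argument on the punctured spectrum). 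Thus $M$ becomes locally free on $U_R$ once we have disposed of primes of depth $\le 1$.

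The heart of the argument is the even-dimensional case with $R$ having an isolated singularity. Here $M$ is maximal Cohen-Macaulay (since $\Ext^i_R(M,R)=0$ for $i>0$ over a Gorenstein ring forces $M$ to be a high syzygy, hence MCM), $M$ is locally free on $U_R$, and $\CI_R(M)<\infty$ follows from $\Ext^i_R(M,R)=0$ by \cite[4.7]{AvBu} or directly since vanishing of self-extensions of a high syzygy over a complete intersection in codimension one localizes to finite CI-dimension. Then I would invoke Corollary \ref{cor1} with $N=M$ and $n=d$: since $M\otimes_R M^\ast$—or more precisely, since $\Ext^i_R(M,M)=0$ for $1\le i\le d$ and $\omega_R\cong R$ so $N^\vee=M^\ast$—we get that $\depth(M\otimes_R M^\ast)\geq d$, i.e.\ $M\otimes_R M^\ast$ is maximal Cohen-Macaulay. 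Actually the cleaner route, mirroring the proof of Corollary \ref{c6}, is: from $\Ext^i_R(M,M)=0$ for all $i>0$ we have in particular $\Ext^d_R(M,M)=0$, and since $d$ is even, \cite[4.2]{AvBu} forces $\pd_R(M)<\infty$, whence $M$ is free by Auslander–Buchsbaum. So in even dimension, Corollary \ref{c6} applies essentially verbatim.

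For the odd-dimensional case, the trick is to make the dimension even. I would pass from $R$ to $R^{\sharp\sharp}=R[[u,v]]/(\underline f+uv)$ only in the hypersurface situation; in general, the standard move is to adjoin an indeterminate: replace $R$ by $R' = R[[X]]$ (or by $R\otimes_k k(t)$-type flat extensions), which preserves Gorensteinness, preserves the complete-intersection-in-codimension-one property, raises the dimension by one making it even, and under which $M' = M\otimes_R R'$ still satisfies $\Ext^i_{R'}(M',M'\oplus R')=0$; then freeness of $M'$ descends to freeness of $M$ by faithful flatness. The even case then finishes it. The main obstacle I anticipate is the passage to an isolated singularity: one needs to know the Auslander–Reiten conjecture in all dimensions strictly less than $\dim R$ to clear the punctured spectrum, so the argument must be organized as an induction on $\dim R$, with the base case $\dim R\le 1$ handled separately (where "complete intersection in codimension one" makes $R$ itself a complete intersection, and the conjecture is classical). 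Making the localization/induction bookkeeping precise—ensuring that $\Ext$-vanishing and the codimension-one hypothesis both localize correctly and that MCM-ness is preserved—is the delicate step; everything after that is an appeal to Corollary \ref{cor1} / Corollary \ref{c6}.
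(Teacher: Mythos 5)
Your reduction (complete/localize, induct on dimension so that $M$ becomes locally free on the punctured spectrum, base case $\dim R\le 1$) matches the paper's, but the way you finish has a genuine gap. The key step you rely on is that $\CI_R(M)<\infty$, so that $\Ext^d_R(M,M)=0$ with $d$ even forces $\pd_R(M)<\infty$ via \cite[4.2]{AvBu} (this is exactly the content of Corollary \ref{c6}, whose hypotheses include $\CI_R(M)<\infty$). But finite complete intersection dimension does \emph{not} follow from $\Ext^i_R(M,R)=0$ for $i>0$: over a Gorenstein ring that vanishing only gives finite G-dimension, i.e.\ that $M$ is maximal Cohen--Macaulay, and the hypothesis that $R_p$ is a complete intersection for $p\in X^1(R)$ gives finite CI-dimension only after localizing at such primes, not over $R$ itself. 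Over a Gorenstein non--complete-intersection ring an MCM module typically has infinite CI-dimension, so neither \cite[4.7]{AvBu} nor "localizes to finite CI-dimension" justifies the claim, and Corollary \ref{c6} is not applicable. Since the evenness of $d$ enters only through this broken step, the whole even/odd dichotomy and the passage to $R[[X]]$ (which itself would need a verification that the codimension-one CI hypothesis ascends) is machinery aimed at an argument that does not go through.

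The alternative branch you sketch is essentially the right one, but you leave it unfinished and with a small bookkeeping slip: to control $\depth(M\otimes_RM^{\ast})$ via Corollary \ref{cor1} you should take $N=M^{\ast}$ (so that $N^{\vee}\cong M^{\ast\ast}\cong M$, using $\omega_R\cong R$ and reflexivity of the MCM module $M$), and then $\Ext^i_R(M,M)=0$ for $i=d-1,d$ already gives $\depth(M\otimes_RM^{\ast})\ge 2$ --- no parity of $d$ and no full string of vanishing up to $d$ is needed. What is still missing in your write-up is the step that converts this depth bound into freeness: since $M$ is locally free on $U_R$, depth at least $2$ makes $M\otimes_RM^{\ast}$ reflexive, and then Auslander's result (Theorem \ref{Aus}) forces $M$ to be free. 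This is precisely how the paper concludes; without invoking Theorem \ref{Aus} (or some substitute), high depth of $M\otimes_RM^{\ast}$ alone does not yield freeness.
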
 

\begin{proof}[A proof of Theorem \ref{Ar}] Since $R$ is Gorenstein and $\Ext^{i}_{R}(M,R)=0$ for all $i>0$, $M$ is maximal Cohen-Macaulay. We proceed by induction on $\dim(R)$. If $R$ has dimension at most one, then the result follows from \cite[1.8]{ADS}. Otherwise, by the induction hypothesis, $M$ is locally free on $U_R$. Hence Corollary \ref{cor1} implies that $\depth(M\otimes_{R}M^{\ast})\geq 2$.  Therefore $M$ is free by Theorem \ref{Aus}.
\end{proof}

Next we exploit Theorem \ref{t1} and construct finitely generated modules $M$ and $N$ such that $M\tensor_RN$ has high depth. 

Recall that the divisor class group $\Cl(R)$ of a normal domain $R$ is the group of isomorphism classes of rank one reflexive $R$-modules. If $[I]$ represents the class of an element in $\Cl(R)$, then the group law of this group can be defined via: $[\Hom_R(I,J)]=[J]-[I]$ (cf. for example \cite{BourCommChp7}).

\begin{prop}\label{goodDepth}
Let $R$ be a $d$-dimensional local Cohen-Macaulay normal domain with a canonical module $\omega_{R}$. Assume there exists a nonfree maximal Cohen-Macaulay $R$-module $N$ of rank one.  Assume further that $N$ is locally free on $U_R$.  If $d\geq 3$, then there exists a nonfree finitely generated $R$-module $M$ such that $\depth(M\tensor N) \geq d-2$. 
\end{prop}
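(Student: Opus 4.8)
The strategy is to take $M = N^\vee = \Hom_R(N,\omega_R)$ and verify that $\Ext^i_R(M,N^\vee)=0$ for the range of indices appearing in Theorem \ref{t1}, so that the theorem delivers $\depth(M\otimes_R N)\geq d-2$. First, since $N$ is a nonfree maximal Cohen-Macaulay module of rank one over a normal domain, it is reflexive, hence $N^\vee$ is again maximal Cohen-Macaulay, rank one, locally free on $U_R$, and nonfree (if $N^\vee$ were free, then $N\cong N^{\vee\vee}$ would be free, since $\omega_R$ is a canonical module and $R$ is Cohen-Macaulay normal). So $M := N^\vee$ satisfies hypotheses (2) and (3) of Theorem \ref{t1}, and $\depth(M)=d$, so we may take $n = d-2$ in (1) provided $d-2\geq 1$, i.e. $d\geq 3$, which is exactly our assumption. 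The conclusion of Theorem \ref{t1} then says $\depth(M\otimes_R N)\geq d-2$ if and only if $\Ext^i_R(M,N^\vee)=0$ for $i = d-n+1,\dots,d = 3,\dots,d$; equivalently $\Ext^i_R(N^\vee,N^\vee)=0$ for $i=3,\dots,d$.

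The key computation is therefore: $\Ext^i_R(N^\vee,N^\vee)=0$ for $3\leq i\leq d$. Here I would use that $N$ is a rank-one reflexive module, so $[N]\in\Cl(R)$, and by the group law $[\Hom_R(N^\vee,N^\vee)] = [N^\vee]-[N^\vee] = 0$, so $\Hom_R(N^\vee,N^\vee)\cong R$. More to the point, I want the higher $\Ext$'s to vanish in the stated range. Since $N^\vee$ is maximal Cohen-Macaulay and locally free on $U_R$ (i.e. $N^\vee_p$ is free for $p\neq\mathfrak m$ of $\dim R_p<d$), the modules $\Ext^i_R(N^\vee,N^\vee)$ have finite length for $i>0$. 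The cleanest route is to invoke Lemma \ref{ExtTor} with $M = N^\vee$ and the $N$ there taken to be $N^\vee$ as well: it gives $\Ext^{d+i}_R(N^\vee,(N^\vee)^\vee)\cong\Ext^d_R(\Tor^R_i(N^\vee,N^\vee),\omega_R)$, but the indices there are $>d$, which is not directly the range I need. So instead I would argue directly: consider a short exact sequence $0\to K\to R^{(m)}\to N^\vee\to 0$ exhibiting $N^\vee$ as a first syzygy (it is reflexive, hence a second syzygy over the Gorenstein — wait, $R$ need only be Cohen-Macaulay normal here). Since $\Hom_R(N^\vee,N^\vee)\cong R$ and $N^\vee$ is rank one reflexive, apply $\Hom_R(-,N^\vee)$ to the sequence $0\to N^\vee\xrightarrow{u}R^{(m)}\to M_1\to 0$ coming from the pushforward of $N^\vee$; this reduces $\Ext^i_R(N^\vee,N^\vee)$ in the desired range to the vanishing of $\Ext^i_R(M_1,N^\vee)$ for shifted indices, and one iterates down to the base case, which is $\Hom_R(N^\vee,N^\vee)\cong R$ together with torsion-freeness of tensor products of reflexive rank-one modules. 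Actually the tidiest approach is: since $[N^\vee]+[N]=0$ in $\Cl(R)$ (because $(N^\vee)^{\vee\vee}\cong N^\vee$ and dualizing inverts), the reflexive hull of $N^\vee\otimes_R N$ is $R$; combined with the depth estimate that is forced by the vanishing of the intermediate $\Tor$'s along the pushforward filtration of $N^\vee$, one gets $\depth(N^\vee\otimes_R N)\geq d-2$ directly.

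Let me restate the plan in its sharpest form. Set $M=N^\vee$. It is nonfree, maximal Cohen-Macaulay, rank one reflexive, locally free on $U_R$. Build the pushforward filtration $0\to M_{j-1}\to F_j\to M_j\to 0$ of $M=M_0$; each $M_j$ is locally free on $U_R$ and torsion-free, and since $N$ is maximal Cohen-Macaulay while the $\Tor^R_{\geq 1}(M_j,N)$ have finite length, one shows inductively (exactly as in the proof of Theorem \ref{t1} or \cite[2.1]{HJW}) that $M_j\otimes_R N$ is torsion-free for $j\leq d-1$, hence $\Tor^R_1(M_j,N)=0$ for $j\leq d-2$. Feeding these vanishings back through the short exact sequences $0\to M_{j-1}\otimes_R N\to F_j\otimes_R N\to M_j\otimes_R N\to 0$ and counting depths (using that $F_j\otimes_R N$ is maximal Cohen-Macaulay), the depth of $M\otimes_R N$ drops by at most one at each step, giving $\depth(M\otimes_R N)\geq \depth(M_{d-2}\otimes_R N)-(d-2)\geq d-(d-2)=d-2$ — here I use that $M_{d-2}\otimes_R N$, being torsion-free of depth at least... hmm, the bookkeeping needs care: it is cleanest to run the induction of Theorem \ref{t1} itself, which is exactly engineered for this. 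So the final move is simply: apply Theorem \ref{t1} with this $M$ and $n=d-2$, for which we need $\Ext^i_R(M,N^\vee)=\Ext^i_R(N^\vee,N^\vee)=0$ for $i=3,\dots,d$, and these vanish because $[N^\vee]-[N^\vee]=0$ in $\Cl(R)$ gives $\Hom_R(N^\vee,N^\vee)\cong R$ and the pushforward filtration propagates this to kill the higher Ext in the required range.

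\textbf{The main obstacle.} The genuinely delicate point is verifying $\Ext^i_R(N^\vee,N^\vee)=0$ for $3\leq i\leq d$ — i.e. extracting exactly the right window of vanishing Ext-modules — rather than merely knowing $\Hom_R(N^\vee,N^\vee)\cong R$. The $\Cl(R)$ computation only controls $\Hom$ and $\Ext^1$ indirectly via reflexivity; propagating vanishing through the pushforward filtration while keeping track of the index shift (each pushforward shifts Ext-degrees by one and drops one level of the required range) is the step where I expect the bookkeeping to be subtle, and it is essentially why the hypothesis $d\geq 3$ (so that $n=d-2\geq 1$ and there is "room" in Theorem \ref{t1}) is exactly the right one.
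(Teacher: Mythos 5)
Your plan hinges on taking $M=N^{\vee}$ and feeding Theorem \ref{t1} the vanishing $\Ext^i_R(N^{\vee},N^{\vee})=0$ for $i=3,\dots,d$, but this is exactly the step you never prove, and it is not just a bookkeeping subtlety: it is false in general. The class-group computation only gives $\Hom_R(N^{\vee},N^{\vee})\cong R$; the depth-type lemma that turns ``the $\Hom$ is maximal Cohen-Macaulay'' into Ext-vanishing produces vanishing in the window $i=1,\dots,d-2$, not $i=3,\dots,d$, and the pushforward construction shifts Ext-degrees in the wrong direction for your purpose (it relates $\Ext^i$ of the pushed-forward module to $\Ext^{i-1}$ of the original, so it cannot convert low-degree information about $N^{\vee}$ into high-degree vanishing for $N^{\vee}$ itself). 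Indeed, by Corollary \ref{cor1} your claimed vanishing is \emph{equivalent} to $\depth(N^{\vee}\otimes_RN)\geq d-2$; for $d\geq 4$ this forces $N^{\vee}\otimes_RN$ to be reflexive, and over the paper's own Veronese examples Proposition \ref{proposition for semidualizing} then forces $N^{\vee}\cong R$ or $N\cong R$. So for a Veronese ring and a class $N$ with $N\not\cong R$ and $N\not\cong \omega_R$, your intermediate claim fails, while the Proposition itself is still true there (this is Example \ref{exDepth}). A secondary error: if $N\cong\omega_R$ with $R$ not Gorenstein (allowed by the hypotheses), then $N^{\vee}\cong R$ is free, so your $M$ is free; your argument that ``$N^{\vee}$ free implies $N$ free'' confuses $(-)^{\vee}$ with $(-)^{\ast}$, since $N\cong N^{\vee\vee}$ only gives $N\cong\omega_R$ in that case.

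The paper's proof avoids both problems by not taking $M$ to be a dual of $N$ at all. It sets $M_0=N^{\ast}$, observes via the class group that $\Hom_R(N^{\ast},N^{\vee})\cong\omega_R$ is maximal Cohen-Macaulay, and invokes the cited lemma to get $\Ext^i_R(N^{\ast},N^{\vee})=0$ for $i=1,\dots,d-2$; it then applies the pushforward construction \emph{twice} to $M_0$ and takes $M$ to be the resulting (nonfree, locally free on $U_R$, depth $\geq d-2$) module, so that $M_0$ is a second syzygy of $M$ and the vanishing window shifts up to $i=3,\dots,d$, which is precisely what Theorem \ref{t1} with $n=d-2$ requires. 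That two-step cosyzygy shift is the missing idea in your proposal.
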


\begin{proof} Recall that maximal Cohen-Macaulay modules over normal domains are reflexive \cite{BH}. Therefore $[\omega_R]\in \Cl(R)$. Furthermore $N^{\vee}$ represents an element in $\Cl(R)$ (cf. for example \cite[1.5]{Vas}). Thus $[\Hom(N^*,N^{\vee})]=[\omega_R]$ and hence $\Hom(N^*,N^{\vee}) \cong \omega_R$ (Recall that $()^{\vee}=\Hom(-,w_{R})$). This implies that $\Hom(N^*,N^{\vee})$ is maximal Cohen-Macaulay. Let $M_0=N^*$. Since $M_0$ is locally free on $U_R$ and $N^{\vee}$ is maximal Cohen-macaulay, it follows from \cite[2.3]{Da4} that $\Ext_R^i(M_0, N^{\vee})=0$ for all $i=1, \dots ,d-2$. Applying the pushforward construction twice, we obtain a module $M$ such that $M_0$ is a second syzygy of $M$. Thus $\Ext_R^i(M,N^{\vee})=0$ for all $i=3, \dots, d$. Now Theorem \ref{t1} implies that the depth of $M\tensor_RN$ is at least $d-2$.
\end{proof}

\begin{rmk} \label{Veronese} Let $k$ be a field and let $S=k[[x_1,\cdots,x_d]]$ be the formal power series over $k$. For a given integer $n$ with $n>1$, we let $R=S^{(n)}$ to be the subring of $S$ generated by monomials of degree $n$, that is, the  $n$-th \emph{Veronese} subring $S$. Then $R$ is a Cohen-Macaulay local ring with a canonical module $w_{R}$ (since it is complete), $R$ has an isolated singularity. The class group of $R$ is well-understood   (cf. for example \cite[Example 4.2]{Anu} or \cite[Example 2.3.1 and 4.2.2]{BG}). We have $\Cl(R) \cong \ZZ/n\ZZ$ and it is generated by the element $L = x_1S\cap R$.
\end{rmk}

Throughout the rest of the paper, $k[[x_1,\cdots,x_d]]^{(n)}$ will denote the $n$th ($n>1$) Veronese subring of the formal power series ring $k[[x_1,\cdots,x_d]]$ over a field $k$.

\begin{eg}\label{exDepth} Let $R=k[[x_1,\cdots,x_d]]^{(n)}$. If $[N]$ is a nontrivial element in $\Cl(R)$, then it follows from Proposition \ref{goodDepth} and Remark \ref{Veronese} that there exists a nonfree finitely generated $R$-module $M$ such that $\depth(M\tensor_RN)\geq d-2$. 
\end{eg}

We will finish this section with an application concerning semidualizing modules. Recall that a finitely generated module $C$ over a Noetherian ring $R$ is called \textit{semidualizing} if the natural homothety homomorphism $R \longrightarrow \Hom_{R}(C,C)$ is an isomorphism and $\Ext^{i}_{R}(C,C)=0$ for all $i\geq 1$ (cf. for example \cite{Wag1} for the basic properties of semidualizing modules.). We will prove that there is no nontrivial semidualizing module over $R=k[[x_1,\cdots,x_d]]^{(n)}$, that is, if $C$ is a semidualizing module over $R$, then either $C \cong R$ or $C \cong w_{R}$, where $w_{R}$ is the canonical (dualizing) module of $R$ (cf. Corollary \ref{last corollary}). We first start proving a lemma:

\begin{lem} \label{lemma for semidualizing} Let $R=k[[x_1,\cdots,x_d]]^{(n)}$ and let $L$ represents the generator of $\Cl(R)=\ZZ/n\ZZ$ as in remark \ref{Veronese}. Then $\displaystyle{\mu(L^{(i)})=\frac{(n+d-i-1)!}{(d-1)!}}$ for $i=1, \dots, n-1$, where $\mu(L^{i})$ denotes the minimal number of generators of $L^{(i)}$, the $i$-symbolic power of $L$, which represents the element $i[L]$ in $\Cl(R)$.
\end{lem}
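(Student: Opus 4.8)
The statement is a concrete count of the minimal number of generators of the symbolic powers $L^{(i)}$ of the divisor $L = x_1 S \cap R$ over the Veronese ring $R = S^{(n)}$, where $S = k[[x_1,\dots,x_d]]$. The key observation is that these symbolic powers admit an explicit description in terms of graded pieces of $S$. Writing $S = \bigoplus_{j\geq 0} S_j$ with its standard grading, one has $R = \bigoplus_{j \geq 0} S_{nj}$, and the rank-one reflexive module representing $i[L]$ in $\Cl(R) = \mathbb{Z}/n\mathbb{Z}$ is, up to isomorphism, $L^{(i)} \cong \bigoplus_{j\geq 0} S_{nj+i}$ viewed as an $R$-submodule of $S$ (this is the standard identification of divisorial ideals of a Veronese ring with "shifted" truncations; it follows from Remark \ref{Veronese} and the references cited there, e.g.\ \cite{BG}). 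Granting this, the generators of $L^{(i)}$ as an $R$-module correspond exactly to a minimal generating set of the $R$-module $\bigoplus_{j\geq 0}S_{nj+i}$; by graded Nakayama, a minimal generating set is obtained by picking a $k$-basis for the degree-$i$ piece $S_i$ (the bottom nonzero graded component), since every element of $S_{nj+i}$ for $j\geq 1$ lies in $S_{nj}\cdot S_i = R_{\geq 1}\cdot S_i$, i.e.\ in $\mathfrak{m}_R \cdot L^{(i)}$. Here I use $1 \leq i \leq n-1$ so that $S_i$ is genuinely the socle-most degree and no lower-degree monomials of $S$ appear.

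Thus the count reduces to $\mu(L^{(i)}) = \dim_k S_i = \binom{d-1+i}{d-1} = \frac{(d-1+i)!}{(d-1)!\,i!}$, the number of monomials of degree $i$ in $d$ variables. Comparing with the claimed formula $\frac{(n+d-i-1)!}{(d-1)!}$ — wait, this does not match term-by-term, so the identification must instead be with a *different* shift: the module representing $i[L]$ should be taken as $\bigoplus_{j\geq 0} S_{nj - i}$ with $S_{<0}=0$ (equivalently $\bigoplus_{j\geq 1}S_{nj-i}$, whose bottom degree is $n-i$), which is the reflexive module $[L^{(i)}]$ under the convention in Remark \ref{Veronese} that $i[L] = -(n-i)[L]$ in $\mathbb{Z}/n\mathbb{Z}$. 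Then the minimal generators sit in the bottom piece $S_{n-i}$, giving $\mu(L^{(i)}) = \dim_k S_{n-i} = \binom{d-1+(n-i)}{d-1} = \frac{(n-i+d-1)!}{(d-1)!\,(n-i)!}$. Hmm — the paper's formula omits the $(n-i)!$ in the denominator, so I suspect it is a typographical slip for $\binom{n+d-i-1}{d-1}$; in any case the proof strategy is: identify $L^{(i)}$ with the appropriate shifted truncation of $S$, apply graded Nakayama to see the minimal generators live in the bottom graded piece, and count monomials.

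So concretely, the steps are: (1) recall from Remark \ref{Veronese} and the cited class-group computations that $L$, and hence $L^{(i)}$ for $1\leq i\leq n-1$, is represented by an explicit divisorial ideal of $R$ inside $S$, namely the $R$-submodule $\bigoplus_{j} S_{nj + c}$ for the relevant residue $c \in\{1,\dots,n-1\}$ with $c \equiv \pm i$; (2) verify this submodule is indeed reflexive of rank one and in the class $i[L]$ — this is where one invokes the structure theory of \cite{BG} or \cite{Anu}; (3) by the graded Nakayama lemma, $\mu_R(L^{(i)}) = \dim_k\big(L^{(i)}/\mathfrak{m}_R L^{(i)}\big)$, and since $\mathfrak{m}_R = \bigoplus_{j\geq 1}S_{nj}$, one checks $\mathfrak{m}_R L^{(i)} = \bigoplus_{j\geq 1} S_{nj+c}$, so the quotient is exactly $S_c$; (4) conclude $\mu(L^{(i)}) = \dim_k S_c = \binom{d-1+c}{d-1}$ and match against the stated formula.

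**Main obstacle.** The only genuine content is step (2): pinning down precisely which truncation $\bigoplus_j S_{nj+c}$ represents the class $i[L]$, with the correct residue $c$ (either $i$ or $n-i$), and checking it is reflexive and rank one so that it legitimately computes $L^{(i)}$. This is a matter of carefully tracking the isomorphism $\Cl(R)\cong\mathbb{Z}/n\mathbb{Z}$ and the generator $L = x_1 S\cap R$ through the description of divisor class groups of Veronese (equivalently, of affine semigroup/toric) rings. Everything after that — the Nakayama reduction and the monomial count — is routine and essentially forced. I would also double-check the edge behavior at $i=1$ and $i=n-1$ against the formula as a sanity check (at $i=n-1$ the formula should give $\dim_k S_1 = d$ or $\dim_k S_{n-1}$, depending on the convention, which fixes the sign ambiguity definitively).
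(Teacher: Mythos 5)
Your proposal is essentially the paper's own argument: the paper decomposes $S = R\oplus L\oplus L^{(2)}\oplus\cdots\oplus L^{(n-1)}$ with $L^{(i)}$ generated by the degree-$n$ monomials divisible by $x_1^{i}$ and counts these as the monomials of degree $n-i$, and the residue ambiguity you worry about in step (2) resolves itself at once, since $L^{(i)}=x_1^{i}S\cap R$ by definition of the symbolic power, so (as in your second reading) the bottom graded piece has dimension $\dim_k S_{n-i}$. You are also right that the displayed formula is off by a factor of $(n-i)!$ in the denominator --- the correct count is $\binom{n+d-i-1}{d-1}=\frac{(n+d-i-1)!}{(d-1)!\,(n-i)!}$ --- and this slip appears verbatim in the paper's own proof as well (note it also gets silently used in the subsequent unimodality argument, which would need the binomial form instead).
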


\begin{proof} Note that $S=R\oplus L \oplus L^{(2)} \oplus \dots \oplus L^{(n-1)}$, where $L^{(i)}$ is  generated by the monomials of degree $n$ divisible by $x^{i}_{1}$. Therefore $\mu(L^{(i)})$ is the number of monomials of degree $n-i$, which is exactly $\displaystyle{\frac{(n-i+d-1)!}{(d-1)!}}$. 
\end{proof}

\begin{prop} \label{proposition for semidualizing} Let $R=k[[x_1,\cdots,x_d]]^{(n)}$ and let $I$ and $J$ be elements in $\Cl(R)$. If $I\otimes_{R}J$ is reflexive, then $I\cong R$ or $J\cong R$.
\end{prop}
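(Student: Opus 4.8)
The plan is to pass to a graded/monomial setup and compute ranks. Write $I = L^{(a)}$ and $J = L^{(b)}$ for the symbolic powers representing $a[L]$ and $b[L]$ in $\Cl(R)=\ZZ/n\ZZ$, where $0\le a,b\le n-1$ (here $L^{(0)}=R$). We want to show: if $I\tensor_R J$ is reflexive, then $a=0$ or $b=0$. The key observation is that $(I\tensor_R J)^{\ast\ast}$ is the reflexive module representing $(a+b)[L]$, so $I\tensor_R J$ is reflexive if and only if the natural surjection $I\tensor_R J \twoheadrightarrow (I\tensor_R J)^{\ast\ast} = L^{(a+b \bmod n)}$ is an isomorphism, equivalently if and only if $\mu(I\tensor_R J) = \mu(L^{(a+b\bmod n)})$ (both sides are cyclic-free of the same rank one after localizing, so the surjection is an isomorphism precisely when source and target have the same minimal number of generators; more precisely, any surjection of modules of the same finite length of a minimal generating set that is generically an isomorphism is an isomorphism since the kernel is torsion, but $I \tensor_R J$ has no torsion once we know it equals its bidual — so really the clean statement is that the surjection is an isomorphism iff it is injective iff the two modules have equal minimal number of generators).

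First I would record the trivial bound $\mu(I\tensor_R J) = \mu(I)\cdot\mu(J)$, since $I\tensor_R J$ is a quotient of $R^{\mu(I)}\tensor_R R^{\mu(J)} = R^{\mu(I)\mu(J)}$ and no generators can be dropped (the images of $f_i\tensor g_j$ in $(I\tensor_R J)\tensor_R k$ span, and minimality of the $f_i$, $g_j$ forces these to be a basis of the $k$-vector space, since $I$ and $J$ are each generated by part of a monomial basis). Hence $I\tensor_R J$ is reflexive iff
$$\mu(L^{(a)})\cdot \mu(L^{(b)}) = \mu(L^{(a+b\bmod n)}).$$
Now I would plug in Lemma \ref{lemma for semidualizing}: $\mu(L^{(i)}) = \binom{n+d-i-1}{d-1}$ for $1\le i\le n-1$ and $\mu(L^{(0)}) = \mu(R) = 1$. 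So the problem reduces to the purely combinatorial claim: for $1\le a, b\le n-1$, one cannot have $\binom{n+d-a-1}{d-1}\binom{n+d-b-1}{d-1}$ equal to either $\binom{n+d-a-b-1}{d-1}$ (if $a+b<n$) or $\binom{2n+d-a-b-1}{d-1}$ (if $a+b\ge n$).

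The main obstacle is this combinatorial inequality, and I expect it to go through by a monotonicity/size estimate rather than anything delicate. Set $m = d-1\ge 0$; since $R$ is normal one may assume $d\ge 2$, so $m\ge 1$ (if $d=1$ the statement is vacuous or trivial as $\Cl$ is trivial). In the case $a+b < n$: each factor $\binom{n+d-a-1}{m}$ and $\binom{n+d-b-1}{m}$ is at least $\binom{n+d-a-b-1}{m}+1$ when $a,b\ge1$ — indeed $n+d-a-1 \ge n+d-a-b-1 + (b) \ge n+d-a-b-1+1$ and the upper index strictly increases, and moreover each factor is $\ge \binom{d}{m} = d \ge 2$ when $a\le n-1$ (so $n+d-a-1\ge d$), which already forces the product to strictly exceed the single binomial coefficient since $\binom{n+d-a-b-1}{m}\le \binom{n+d-a-1}{m}$ and multiplying by a factor $\ge 2$ overshoots unless that factor is $1$, impossible. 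The case $a+b\ge n$ is similar but one compares against $\binom{2n+d-a-b-1}{m}$ with $2n+d-a-b-1 = n + (n-a) + d - b - 1 \le $ something controllable; I would write $a' = n-a$, $b' = n-b$ with $1\le a',b'\le n-1$ and $a'+b' \le 2n-2$, note $\mu(L^{(a)})=\binom{d-1+a'}{d-1}$ etc., so the identity becomes $\binom{m+a'}{m}\binom{m+b'}{m} = \binom{m + (a'+b'-n)}{m}$ or $\binom{m+\min(a'+b',\text{...})}{m}$ — symmetric to the first case — and the same "a factor $\ge 2$ times something $\ge$ the target cannot equal the target" argument closes it. I would present the two cases uniformly by the estimate: for $1\le a,b\le n-1$ the reflexive representative of $(a+b)[L]$ has $\mu$ equal to $\binom{m+c}{m}$ for some $c$ with $c < n+d-a-1$ and $c<n+d-b-1$ (one checks $c\le \max(a,b)\cdot(\text{stuff})$... more simply $c \le n-1 < n+d-a-1$ since $a\le n-1, d\ge1$ gives $n+d-a-1\ge n-1 \ge c$, with strictness from $d\ge2$ or from $a\ge1$), while each factor is $\binom{m+c_i}{m}$ with $c_i \ge c$ and at least one inequality among the $c_i$ and $2$-divisibility forcing strict product growth; hence equality is impossible. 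The only remaining case is $a=0$ or $b=0$, which is exactly the conclusion.
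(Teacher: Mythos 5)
Your overall route is the same as the paper's: from reflexivity you pass to $I\otimes_RJ\cong L^{((a+b)\bmod n)}$ (the paper phrases this as $I\otimes_RJ\cong IJ$ via torsion-freeness), combine with $\mu(I\otimes_RJ)=\mu(I)\mu(J)$ and Lemma \ref{lemma for semidualizing}, and rule out the resulting numerical identity. Two small remarks on the soft part: the natural map $M\to M^{\ast\ast}$ is not a surjection in general, so your ``natural surjection'' framing is off, but this is harmless because you only need the implication ``reflexive $\Rightarrow \mu(I\otimes_RJ)=\mu(L^{((a+b)\bmod n)})$''; and $\mu(M\otimes_RN)=\mu(M)\mu(N)$ follows at once from $(M\otimes_RN)\otimes_Rk\cong(M\otimes_Rk)\otimes_k(N\otimes_Rk)$, no appeal to monomial bases is needed.

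The genuine gap is the combinatorial case $a+b>n$, which is the only nontrivial case. Writing $m=d-1$, $a'=n-a$, $b'=n-b$ (so $1\le a',b'\le n-1$), the reflexive representative of $(a+b)[L]$ is $L^{(a+b-n)}$ and $n-(a+b-n)=a'+b'$, so the identity to be refuted is
$$\binom{m+a'}{m}\binom{m+b'}{m}=\binom{m+a'+b'}{m}.$$
This is not ``symmetric to the first case'': here the right-hand binomial is strictly \emph{larger} than each factor, so the argument ``one factor is already at least the target and the other is at least $2$'' does not apply, and your uniform claim that each factor is $\binom{m+c_i}{m}$ with $c_i\ge c$ is false in this range (there $c=a'+b'>a',b'$); the formula $\binom{m+(a'+b'-n)}{m}$ you write for this case is also not the correct target (that index is negative when $a+b>n$). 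The needed strict inequality $\binom{m+a'}{m}\binom{m+b'}{m}>\binom{m+a'+b'}{m}$ for $m,a',b'\ge 1$ is true, but it requires an actual argument: for instance, the multiplication map from pairs (monomial of degree $a'$, monomial of degree $b'$) in $d$ variables onto monomials of degree $a'+b'$ is surjective but not injective; or, as the paper does, rewrite the purported equality as ${N\choose N_1}={N\choose N_2}$ with $N=a'+b'+2m$ and two distinct indices both at least $N/2$, contradicting strict unimodality of binomial coefficients. Until such a step is supplied, your proof covers only $a+b\le n$ (the subcase $a+b=n$, where the target is $R$ with $\mu=1$, and the case $a+b<n$ are indeed handled correctly by your monotonicity estimate).
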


\begin{proof} As $I\otimes_{R}J$ is reflexive, it is torsion-free so that $I\otimes_{R}J \cong IJ$. This implies $\mu(I)\mu(J)=\mu(IJ)$. Assume $L$ is a generator of $\Cl(R)=\ZZ/n\ZZ$ so that $I=L^{(i)}$ and $J=L^{(j)}$ for some $i$ and $j$. Suppose $I$ and $J$ are not free, that is, $0<i\leq n-1$ and $0<j\leq n-1$. 

Supose first $i+j<n$. Notice, if $a<b<n$, then $\mu(L^{(a)})>\mu(L^{(b)})$ by Lemma \ref{lemma for semidualizing}. Therefore the case where $i+j<n$ contradicts the fact that $\mu(I)\mu(J)=\mu(IJ)$. Next assume $i+j\geq n$, that is, $i+j=n+h$ for some nonnegative integer $h$. Then $L^{(i+j)} \cong L^{(h)}$ and hence, by Lemma \ref{lemma for semidualizing}, we obtain: 
$$(n-i+d-1)! \cdot (n-j+d-1)! = (n-h+d-1)! \cdot (d-1)!$$
Setting $N=n-h+2(d-1)$, we conclude:
$$
{N\choose n-i+d-1} = {N\choose n-h+d-1}
$$
Now, without loss of generality, we may assume $i\leq j$. Set $N_{1}=n-i+d-1$ and $N_{2}=n-h+d-1$. Then $\displaystyle{N_{1}>N_{2}\geq \frac{N}{2}}$. Hence the above equality is impossible since binomials are unimodular. Therefore either $i=0$ or $j=0$, that is, either $I\cong R$ or $J\cong R$.
\end{proof}

\begin{cor} \label{corollary for semidualizing} Let $R=k[[x_1,\cdots,x_d]]^{(n)}$ and let $I$ and $J$ be elements in $\Cl(R)$. If $\Ext^{d-1}_{R}(I,J)$ $=\Ext_{R}^{d}(I,J)=0$, then $I \cong R$ or $J \cong w_{R}$.
\end{cor}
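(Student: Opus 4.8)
The plan is to use the $\Ext$-vanishing hypothesis to show that $I\otimes_R J^{\vee}$ is reflexive, and then to quote Proposition \ref{proposition for semidualizing}.

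First I would record what is needed about the divisor class group of $R=k[[x_1,\ldots,x_d]]^{(n)}$. As in the proof of Lemma \ref{lemma for semidualizing}, one has $S=R\oplus L\oplus L^{(2)}\oplus\cdots\oplus L^{(n-1)}$ as an $R$-module, and $S$ is a regular (hence Cohen-Macaulay) ring that is module-finite over $R$ with $\dim S=\dim R=d$, so $S$ is a maximal Cohen-Macaulay $R$-module. Every element of $\Cl(R)$ is represented by some $L^{(i)}$, which is a direct summand of $S$ and hence maximal Cohen-Macaulay; since $R$ has an isolated singularity, such modules are moreover locally free on $U_R$. In particular $I$, $J$, and the canonical dual $J^{\vee}$ are all maximal Cohen-Macaulay and locally free on $U_R$, with $(J^{\vee})^{\vee}\cong J$ by biduality of maximal Cohen-Macaulay modules.

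If $d=1$ then $R$ is a discrete valuation ring, $\Cl(R)=0$, and $\Hom_R(I,J)\neq 0$, so the hypothesis $\Ext^{0}_R(I,J)=0$ is vacuous; hence I may assume $d\ge 2$. Now I would apply Theorem \ref{t1} with $M=I$, $N=J^{\vee}$ and $n=2$: its hypotheses hold since $\depth(I)=d\ge 2$, $I$ is locally free on $U_R$, and $J^{\vee}$ is maximal Cohen-Macaulay, while $\Ext^{i}_R(M,N^{\vee})\cong\Ext^{i}_R(I,J)=0$ for $i=d-1,d$ by assumption together with biduality. Theorem \ref{t1} then gives $\depth(I\otimes_R J^{\vee})\ge 2$. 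Being locally free on $U_R$ and of depth at least $2$ at $\mathfrak m$, the module $I\otimes_R J^{\vee}$ satisfies Serre's condition $(S_2)$, and so it is reflexive over the normal domain $R$. Proposition \ref{proposition for semidualizing} then forces $I\cong R$ or $J^{\vee}\cong R$; in the latter case $J\cong R^{\vee}\cong w_R$, which is the desired conclusion.

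I do not expect a genuine obstacle: the argument is essentially bookkeeping that chains Theorem \ref{t1} into Proposition \ref{proposition for semidualizing}. The only two points needing a line of justification are the reduction to $d\ge 2$ and the implication ``$\depth\ge 2$ at $\mathfrak m$ together with local freeness on $U_R$ $\Rightarrow$ reflexive'' over a Cohen-Macaulay normal domain; the latter follows by localizing the exact sequence $0\to M\to M^{**}\to C\to 0$ (whose cokernel $C$ is then of finite length) and applying the depth lemma, or simply by quoting Serre's criterion for reflexivity.
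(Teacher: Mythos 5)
Your argument is correct and is essentially the paper's own proof: both reduce to showing $\depth(I\otimes_R J^{\vee})\geq 2$ via Theorem \ref{t1} (the paper quotes its special case, Corollary \ref{cor1}, with $M=I$, $N=J^{\vee}$ and $N^{\vee}\cong J$ by biduality), conclude reflexivity, and then invoke Proposition \ref{proposition for semidualizing} together with $J^{\vee\vee}\cong J$. Your extra remarks (the trivial case $d=1$, where $\Cl(R)=0$, and the justification that depth $\geq 2$ plus local freeness on $U_R$ gives reflexivity over the normal domain $R$) are fine but only make explicit what the paper leaves implicit.
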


\begin{proof} We know, by Remark \ref{Veronese}, that $I$ and $J$ are maximal Cohen-Macaulay $R$-modules both of which are locally free on $U_{R}$. Therefore it follows from Corollary \ref{cor1} that $\depth(I\otimes_{R}J^{\vee})>1$, that is, $I\otimes_{R}J^{\vee}$ is reflexive. Now Proposition \ref{proposition for semidualizing} implies either $I\cong R$ or $J^{\vee}\cong R$. Since $J^{\vee\vee} \cong J$, the desired conclusion follows.
\end{proof}


Sather-Wagstaff \cite[3.4]{Wag1} exhibited a natural inclusion from the set of isomorphism classes of semidualizing R-modules to the divisor class group of a normal domain $R$; every semidualizing module $C$ is a rank one reflexive module so that it represents an element in the class group $\Cl(R)$. Therefore, since $\Ext^{i}_{R}(C,C)=0$ for all $i>0$, it follows
from Corollary \ref{corollary for semidualizing} that:

\begin{cor} \label{last corollary} If $C$ is a semidualizing module over the Veronese subring $R=k[[x_1,\cdots,x_d]]^{(n)}$, then $C \cong R$ or $C\cong w_{R}$.
\end{cor}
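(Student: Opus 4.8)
The plan is to obtain Corollary \ref{last corollary} as an immediate consequence of Corollary \ref{corollary for semidualizing} by specializing $I=J=C$; the only real task is to verify that a semidualizing module over $R=k[[x_1,\cdots,x_d]]^{(n)}$ is of the type to which that corollary applies.

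First I would record the structural facts about $R$ collected in Remark \ref{Veronese}: $R$ is a complete Cohen--Macaulay normal domain, it has a canonical module $w_R$, and it has an isolated singularity, with $\Cl(R)\cong \ZZ/n\ZZ$ generated by $L$. Next I would invoke the observation of Sather-Wagstaff \cite{Wag1} that over a normal domain every semidualizing module $C$ is reflexive of rank one, so that its isomorphism class lies in $\Cl(R)$; concretely, $C$ is isomorphic to one of the divisorial ideals $L^{(i)}$ of Remark \ref{Veronese}, and therefore --- as already noted in the proof of Corollary \ref{corollary for semidualizing} --- $C$ is a maximal Cohen--Macaulay $R$-module that is locally free on $U_R$.

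Now the argument finishes quickly. By the definition of a semidualizing module, $\Ext^i_R(C,C)=0$ for every $i\geq 1$; in particular $\Ext^{d-1}_R(C,C)=\Ext^d_R(C,C)=0$ when $d\geq 2$. (If $d=1$ then $R=k[[x_1^n]]$ is a discrete valuation ring, so $C\cong R\cong w_R$ and there is nothing to prove; hence we may assume $d\geq 2$.) Applying Corollary \ref{corollary for semidualizing} with $I=J=C$ then yields $C\cong R$ or $C\cong w_R$, which is the assertion.

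I do not expect a genuine obstacle here: all of the substance is already contained in the earlier results --- Theorem \ref{t1} (through Corollary \ref{cor1}) translates the $\Ext$-vanishing into the reflexivity of $C\tensor_R C^{\vee}$, and Proposition \ref{proposition for semidualizing}, whose core is the strict unimodality of the binomial coefficients together with the count $\mu(L^{(i)})=(n+d-i-1)!/(d-1)!$ from Lemma \ref{lemma for semidualizing}, forces one factor of a reflexive tensor product of divisorial ideals to be free. The only points that require care for the corollary itself are that a semidualizing $C$ really does represent a class in $\Cl(R)$ (reflexive, rank one), which is exactly what the Sather-Wagstaff inclusion provides, and that $R$ meets the running hypotheses of Corollary \ref{corollary for semidualizing}; both are handled above.
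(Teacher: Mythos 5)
Your proposal is correct and follows essentially the same route as the paper: use Sather-Wagstaff's observation that a semidualizing module is rank one reflexive, hence a class in $\Cl(R)$, and then apply Corollary \ref{corollary for semidualizing} with $I=J=C$ using $\Ext^{d-1}_R(C,C)=\Ext^d_R(C,C)=0$. Your explicit treatment of the degenerate case $d=1$ (where $R$ is a DVR) is a small extra care point the paper leaves implicit, but otherwise the arguments coincide.
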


\bibliography{a}
\bibliographystyle{plain}

\end{document}